\documentclass{amsart}
\usepackage{amsfonts, amsmath, amssymb, mathrsfs, amscd}
\usepackage[active]{srcltx}
\usepackage{verbatim}



\theoremstyle{plain}
\newtheorem{theorem}{Theorem}[section]
\theoremstyle{remark}
\newtheorem{remark}[theorem]{Remark}
\newtheorem{example}[theorem]{Example}
\theoremstyle{plain}

\newtheorem{lemma}[theorem]{Lemma}
\newtheorem{proposition}[theorem]{Proposition}
\newtheorem{definition}[theorem]{Definition}

\numberwithin{equation}{section}

\newcommand{\R}{\mathbb{R}}

\newcommand{\nN}{n \in \mathbb{N}}
\newcommand{\N}{\mathbb{N}}
\newcommand{\C}{\mathbb{C}}
\newcommand{\K}{\mathbb{K}}
\newcommand{\E}{\mathbb{E}}

\newcommand{\bean}{\begin{eqnarray*}}
\newcommand{\eean}{\end{eqnarray*}}

\newcommand{\la}{\langle}
\newcommand{\ra}{\rangle}
\newcommand{\lb}{\langle}
\newcommand{\rb}{\rangle}
\newcommand{\wh}{\widehat}

\newcommand{\n}{\Vert}
\newcommand{\s}{^*}
\newcommand{\e}{\varepsilon}

\newcommand{\embed}{\hookrightarrow}
\renewcommand{\odot}{\hbox{\tiny\textcircled{s}}}

\newcommand{\calB}{\mathscr{B}}
\newcommand{\calF}{\mathscr{F}}

\newcommand{\calL}{\mathscr{L}}

\newcommand{\calY}{\mathscr{Y}}
\renewcommand{\P}{\mathbb{P}}

\newcommand{\one}{\mathbf{1}}

\renewcommand{\H}{{\mathscr H}}

\allowdisplaybreaks

\begin{document}

\title[Second quantisation for skew convolution products] {Second Quantisation
for Skew Convolution Products of Measures in Banach Spaces}

\author{David Applebaum}

\email{D.Applebaum@sheffield.ac.uk}
\address{School of Mathematics and Statistics,\\
University of Sheffield, \\
Sheffield S3 7RH\\
United Kingdom.}

\author{Jan van Neerven}
\email{J.M.A.M.vanNeerven@tudelft.nl}
\address{Delft Institute of Applied Mathematics,\\
Delft University of Technology,\\
PO Box 5031\\ 2600 GA Delft, \\
The Netherlands.}

\date{}

\begin{abstract} We study measures in Banach space which arise as the skew
convolution
product of two other measures where the convolution is deformed by a skew map.
This is the structure that underlies both the theory of Mehler semigroups and
operator
self-decomposable measures. We show how that given such a set-up the skew map
can be
lifted to an operator that acts at the level of function spaces and demonstrate
that
this is an example of the well known functorial procedure of second
quantisation. We
give particular emphasis to the case where the product measure is infinitely
divisible
and study the second quantisation process in some detail using chaos expansions
when
this is either Gaussian or is generated by a Poisson random measure.
\end{abstract}

\maketitle

\section{Introduction}

In recent years there has been considerable interest in skew-convolution
semigroups of probability measures in Banach spaces and the so-called Mehler
semigroups that they induce on function spaces. These objects arise naturally in
the study of infinite dimensional Ornstein-Uhlenbeck processes driven by
Banach-space valued L\'{e}vy processes. Such processes have attracted much
attention as they are the solutions of the simplest non-trivial class of
stochastic partial differential equations driven by additive L\'{e}vy noise (see
\cite{App1, CM, PZ}). The first systematic study of Mehler semigroups in their
own right were \cite{BRS} and \cite{FR} with the former concentrating on
Gaussian noise while the latter generalised to the L\'{e}vy case. Harnack
inequalities were obtained in \cite{RW} and the infinitesimal generators were
found in \cite{App4}. From a different point of view, skew-convolution
semigroups also appear naturally in the investigation of continuous state
branching processes with immigration \cite{DLSS} and more general affine
processes \cite{DL}.

In this paper we focus on the representation of Mehler semigroups as second
quantised operators. Such a result has been known for a long time in the
Gaussian case. It was first established for Hilbert space valued semigroups in
\cite{CMG1} and then extended to Banach spaces in \cite{VN1}. Once such a
representation is known it can be put to good use in proving key properties of
the semigroup such as compactness and smoothness \cite{CMG1},
symmetry \cite{CMG2}, analyticity \cite{GolNee, MN},
and in the computation of their $L^{p}$ spectra \cite{VN2}.
When the semigroups act on Hilbert spaces, the desired second quantisation
representation was
recently obtained in \cite{Pesz} in the pure jump case using chaotic
decomposition techniques from \cite{LP}, under the assumption that the Ornstein-Uhlenbeck
process has an invariant measure. This paper extends that result to
the Banach space case and obtains the second quantisation representation without
needing to assume the
existence of an invariant measure.

In fact, within the main part of our paper we dispense with Mehler semigroups
altogether and work with a more general structure which we introduce
herein. For this
we require that there are measures
$\mu_{1}$ on a Banach space $E_1$ and $\mu_{2}$ and $\rho$
on a Banach space
 $E_2$ which are related by the identity
$$ \mu_{2} = T(\mu_{1}) * \rho,$$ where $T:E_1\to E_2$ is a Borel mapping and $*$
is the usual convolution of measures. An operator $T$ that has such an induced
action is precisely a {\it skew map} as featured in the abstract
of this paper. Note that if $E_1 = E_2 = E$ and $\mu_{1} = \mu_{2} = \mu$ say,
then $\mu$ is an {\it
operator self-decomposable} measure and such objects have been intensely studied
(see e.g. \cite{Ju1, JV, Urb}.) The invariant measures arising in \cite{Pesz}
are
precisely of this form. On the other hand a skew convolution semigroup of
measures $(\mu_{t}, t \geq 0)$ with respect to a $C_{0}$-semigroup $(S(t), t
\geq 0)$ is characterised by the relations $\mu_{s+t} = S(t)\mu_{s} * \mu_{t}$
and these are clearly also examples of our structure. At our more general
level, the antecedent of a Mehler semigroup is a bounded linear operator $P_{T}$
which acts from $L^{2}(E_2,\mu_{2})$ to $L^{2}(E_1, \mu_{1})$. Our main result
is then to show that this operator can be seen as a second quantisation
of the adjoint $T\s: E_2\to E_1$ in a
natural way in the case where $\mu_{1}$ and $\mu_{2}$ are both infinitely
divisible and either Gaussian or of pure jump type.

A key part of our approach is the use of a family of vectors that we call {\it
exponential martingale vectors}. We now explain how these arise and contrast
them with
the more familiar {\it exponential vectors} (see e.g. \cite{App3, Par}). Second
quantisation
is seen most naturally as a covariant functor $\Gamma$ within the category whose
objects are Hilbert spaces and morphisms are contractions (see e.g. \cite{Par}).
If $H$ is a Hilbert space and $\Gamma(H)$ is the associated symmetric Fock
space, the set of exponential vectors is linearly independent and total in Fock
space. If we are given a Gaussian field over $H$ then the exponential vectors
correspond to the generating functions of the Hermite polynomials, and from the
point of view of stochastic calculus they correspond both to the
Dol\'{e}ans-Dade
exponentials and to the exponential martingales. When we consider L\'{e}vy
processes, the latter symmetry is broken. Exponential vectors still correspond
to Dol\'{e}ans-Dade exponentials (see \cite{App3}) but these are no longer
exponential
martingales. In this paper, we find that a natural context for defining second
quantisation in a non-Gaussian context is to employ vectors that are natural
generalisations of exponential martingales, rather than using exponential
vectors themselves. Hence we call these exponential martingale vectors. In
particular, as we show
in Section \ref{sec:skew} and the appendix, these are still both total and
linearly
independent.

\vspace{5pt}

{\it Notation}. Throughout this article, $E$ is a real Banach space.
The space of all bounded linear operators on $E$ is denoted by $\calL(E)$
and the dual of $E$ is denoted by $E^{*}$.
The action of $E^{*}$ on $E$ is represented by $x\s(x) = \la x, x\s \ra$.
Whenever we consider measures
on a Banach space $E$, they are defined on the Borel $\sigma$-algebra
${\calB}(E)$.
If $\mu$ is a Borel measure on $E$ and $T: E\to F$ is a Borel mapping from $E$
into another Banach space $F$ we frequently write $T(\mu)$ to denote the Borel
measure
$\mu \circ T^{-1}$.
The Dirac measure based at $x \in E$
is denoted by $\delta_{x}$. The Banach space (with respect to the supremum norm)
of all bounded Borel measurable functions on $E$ will be denoted
$B_{\rm b}(E;\K)$, where $\K$ is either $\R$ or $\C$. If both choices
are permitted we simply write $B_{\rm b}(E)$.

\section{Skew convolution of measures and associated skew maps}\label{sec:skew}

Let $\nu$ be a {\em finite Radon measure} on a Banach space $E$, that is,
$\nu$ is a finite Borel measure on $E$
with the property that for all $\e>0$ there exists a compact set $K$ in $E$
such that $\nu(E\setminus K) <\e.$ Recall that if $E$ is separable, then
every finite Borel measure is Radon.

The {\it characteristic function} of $\nu$
is the mapping $\widehat{\nu}:E^{*} \to \C$ defined by
$$ \widehat{\nu}(x\s) = \int_{E}\exp(i\langle x, x\s \rangle)\, \nu(dx),$$
for all $x\s \in E^{*}$.
The mapping $\widehat{\nu}$ is continuous with respect to the topology of
uniform convergence on
compact subsets of $E$. More generally, for a measurable function $\phi:E\to \R$
we may define
$$  \widehat{\nu}(\phi) = \int_{E}\exp(i\phi(x))\, \nu(dx).$$

\begin{definition}\label{def:skew}
Let $\mu_1$ and $\mu_2$ be Radon probability measures on the Banach spaces $E_1$
and $E_2$,
respectively, with
$\wh{\mu_2}(x\s)\not=0$ for all $x\s\in E_2\s$ (e.g. this condition is fulfilled,
when $\mu_{2}$ is infinitely divisible).
A Borel mapping $T:E_1\to E_2$ is called a {\em skew map} with respect to the
pair
$(\mu_1,\mu_2)$ if there exists a Radon probability measure $\rho$ on $E_2$ such
that
$$ T(\mu_1) * \rho = \mu_2,$$ and we say that $\mu_{2}$ is the {\em
skew-convolution product}
(with respect to $T$) of $\mu_{1}$ and $\rho$.
If $T$ is also a bounded linear operator between $E_{1}$ and $E_{2}$ we call it
a {\em skew operator} with respect to $(\mu_1,\mu_2)$.
\end{definition}

Given the pair $(\mu_1,\mu_2)$, the measure $\rho$ is easily seen to be unique.
Indeed, the identity
$ \wh {T(\mu_1)}(x\s) \wh\rho(x\s) = \wh{\mu_2}(x\s) \not =0$
forces $\wh {T(\mu_1)}(x\s)\not=0$, and
therefore $\wh \rho $ is uniquely determined by $T(\mu_1)$ and $\mu_2$.
We call $\rho$ the {\em skew convolution factor} associated with $T$ and the
pair $(\mu_1,\mu_2)$.

\begin{proposition} \label{prop:Lp}
Suppose that $T:E_1\to E_2$ is a skew map with respect to the pair
$(\mu_1,\mu_2)$, where
$\wh{\mu_2}(x\s)\not=0$ for all $x\s\in E_2\s$. Let $\rho$ be the associated
skew convolution factor.
For all $1\le p<\infty$ the linear mapping $P_T: B_{\rm b}(E_2)\to B_{\rm
b}(E_1)$ defined by
$$ P_T f(x) := \int_{E_2} f(T(x)+y)\,d\rho(y), \quad x\in E_1,$$
extends uniquely to a linear contraction $P_T: L^p(E_2,\mu_2)\to
L^p(E_1,\mu_1)$.
\end{proposition}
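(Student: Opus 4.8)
The plan is to prove the pointwise contraction estimate $\|P_T f\|_{L^p(E_1,\mu_1)}\le \|f\|_{L^p(E_2,\mu_2)}$ for every $f\in B_{\rm b}(E_2)$, and then to obtain the operator by extension by continuity, using that $B_{\rm b}(E_2)$ is dense in $L^p(E_2,\mu_2)$. First I would check that $P_T$ is well defined as a map $B_{\rm b}(E_2)\to B_{\rm b}(E_1)$: for fixed $x\in E_1$ the function $y\mapsto f(T(x)+y)$ is bounded and Borel, hence integrable against the probability measure $\rho$, with $|P_Tf(x)|\le\|f\|_\infty$; and since $(x,y)\mapsto f(T(x)+y)$ is jointly Borel (a composition of Borel maps with vector addition), Fubini's theorem gives Borel measurability of $x\mapsto P_Tf(x)$. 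Thus $P_T$ is a linear contraction for the supremum norms, which in particular settles the case $p=\infty$ implicitly behind the argument.

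For the $L^p$ estimate, fix $f\in B_{\rm b}(E_2)$ and $1\le p<\infty$. Applying Jensen's inequality to the convex function $t\mapsto|t|^p$ and the probability measure $\rho$ gives
$$ |P_Tf(x)|^p \le \int_{E_2}|f(T(x)+y)|^p\,d\rho(y),\qquad x\in E_1. $$
Integrating against $\mu_1$, invoking Fubini (the integrand is bounded and nonnegative), and then the image-measure formula $\int_{E_1}g(T(x))\,d\mu_1(x)=\int_{E_2}g(z)\,d(T(\mu_1))(z)$ applied to $g(z)=\int_{E_2}|f(z+y)|^p\,d\rho(y)$, we obtain
$$ \int_{E_1}|P_Tf(x)|^p\,d\mu_1(x) \le \int_{E_2}\int_{E_2}|f(z+y)|^p\,d\rho(y)\,d(T(\mu_1))(z). $$
The right-hand side equals $\int_{E_2\times E_2}|f(z+y)|^p\,d\big(T(\mu_1)\otimes\rho\big)(z,y)$, which by the definition of convolution and the defining identity $T(\mu_1)*\rho=\mu_2$ is precisely $\int_{E_2}|f(w)|^p\,d\mu_2(w)$. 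Hence $\|P_Tf\|_{L^p(E_1,\mu_1)}\le\|f\|_{L^p(E_2,\mu_2)}$.

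To finish, recall that since $\mu_2$ is a finite Borel measure the bounded Borel functions are dense in $L^p(E_2,\mu_2)$ (already the simple Borel functions are). The estimate just proved shows in particular that $P_Tf=0$ in $L^p(E_1,\mu_1)$ whenever $f\in B_{\rm b}(E_2)$ vanishes $\mu_2$-a.e.; hence $P_T$ factors through the image of $B_{\rm b}(E_2)$ in $L^p(E_2,\mu_2)$ as a contraction into $L^p(E_1,\mu_1)$, and by density it extends uniquely to a linear contraction on all of $L^p(E_2,\mu_2)$.

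I do not expect a genuine obstacle here; the work is bookkeeping. The one point needing a little care is that the naive pointwise formula a priori acts on functions, not on $\mu_2$-equivalence classes, so one must observe — as is done above via the contraction estimate itself rather than by an independent argument — that it respects $\mu_2$-null modifications. The conceptual heart is the single line identifying the iterated integral against $T(\mu_1)$ and $\rho$ with an integral against $\mu_2=T(\mu_1)*\rho$; Jensen's inequality, Fubini, and the image-measure formula then do the rest. (Note that the hypothesis $\wh{\mu_2}(x\s)\neq 0$ is used only to make $\rho$ unambiguous; it plays no role in the estimate.)
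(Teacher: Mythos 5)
Your proposal is correct and follows essentially the same route as the paper: the pointwise estimate via Jensen/H\"older against the probability measure $\rho$, then Fubini, the image-measure formula for $T(\mu_1)$, and the identification of the double integral with $\int |f|^p\,d\mu_2$ via $T(\mu_1)*\rho=\mu_2$, followed by extension by density. The extra care you take about measurability, the null-modification issue, and the role of the hypothesis $\wh{\mu_2}(x\s)\neq 0$ is accurate but left implicit in the paper.
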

\begin{proof}
Fix $1\le p<\infty$. By the H\"older inequality, for all $f\in B_{\rm b}(E_2)$
we have
\begin{align*}
\n P_T f\n_{L^p(E_1,\mu_1)}^p
   & = \int_{E_1} \Big| \int_{E_2} f(T(x)+y)\,d\rho(y)\Big|^p\, d\mu_1(x)
\\ & \le  \int_{E_1} \int_{E_2} |f(T(x)+y)|^p\,d\rho(y)\, d\mu_1(x)
\\ & = \int_{E_2} \int_{E_2} |f(y'+y)|^p\,d\rho(y)\, dT(\mu_1)(y')
\\ & = \int_{E_2} |f(z)|^p \, d(T(\mu_1)*\rho)(z)
\\ & = \int_{E_2} |f(z)|^p \, d\mu_2(z)
= \n f\n_{L^p(E_2,\mu_2)}^p,
\end{align*}
and the required result follows.
 \end{proof}

\begin{example}[Skew Convolution Semigroups]\label{ex:1}
Let $(S(t), t \geq 0)$ be a $C_{0}$-semigroup
on a Banach space $E$. A skew convolution semigroup is a family $(\mu_{t}, t
\geq 0)$ of Radon probability
measures on $E$ for which $\mu_{s+t} = S(t)\mu_{s}*\mu_{t}$ for all $s,t \geq
0$. Then $S(t)$ is
a skew operator with respect to the pair $(\mu_{s}, \mu_{s+t})$, In this case we
write $P_{t}$ for
the linear operator $P_{S(t)}$. Then $(P_{t}, t \geq 0)$ is a semigroup in that
$P_0 = I$ and $P_{s+t} = P_{s}P_{t}$
for all $s, t \geq 0$, and is called a {\it Mehler semigroup} (see e.g.
\cite{App4, BRS, DL, DLSS, FR}). Such objects arise naturally in the study of
linear stochastic partial differential equations with additive
noise of the form:
\begin{align} \label{Lan}
 dY(t) = AY(t) + dL(t),
\end{align}
 where $A$ is the infinitesimal generator of $(S(t), t \geq 0)$ and $(L(t), t
\geq 0)$ is an $E$-valued
L\'{e}vy process. If $E$ is a real Hilbert space then it is well-known (see e.g.
\cite{App1, CM} and the
recent book \cite{PZ}) that
this equation has a unique mild (equivalently weak) solution $(Y(t), t \geq 0)$
which is a Markov process
given by the {\it generalised Ornstein-Uhlenbeck process}:
\begin{align} \label{OUproc}
Y(t) = S(t)Y(0) + \int_{0}^{t}S(t-u) \,dL(u),
\end{align}
(where the initial condition $Y(0)$ is assumed to be independent of $(L(t), t
\geq 0)$.)
Then $\mu_{t}$ is the law of the $E$-valued random variable $\int_{0}^{t}
S(t-u)\,dL(u)$ and
$(P_{t}, t \geq 0)$ is the transition semigroup of $(Y(t), t \geq 0)$.
On a Banach space we may define the stochastic convolution in (\ref{OUproc}) by
using integration
by parts as in \cite{JV}. Quite general necessary and sufficient conditions for
solutions to exist
to (\ref{Lan}) (where the stochastic convolution is defined in the sense of
It\^{o} calculus) are
given in \cite{RvG}. If $X$ is a Brownian motion, we refer
the reader to \cite{vNW}.
\end{example}

\begin{example}[Operator Self-Decomposable Measures]
Let $\mu$ be a Radon probability measure on $E$ that takes the form
\begin{align} \label{SDM}
 \mu = T\mu * \rho,
 \end{align}
where $T$ is a bounded linear operator on $E$ and $\rho$ is another Radon
probability measure on $E$. Then $\mu$
is {\it operator self-decomposable} (see \cite{Urb}) and $T$ is a skew operator
with
respect to the pair $(\mu, \mu)$. There has been extensive work on such measures
in the case where (\ref{SDM})
 holds with $T = S(t)$ for all $t \geq 0$ where $(S(t), t \geq 0)$ is a
$C_{0}$-semigroup on $E$
 (see e.g. \cite{App1, Ju1, JV}). Indeed such measures $\mu$ arise as the
invariant
measures of the Mehler semigroups of Example \ref{ex:1} (when these exist - see
e.g. \cite{CM, FR})
and in the case of (\ref{OUproc}), $\rho$ is the law of
$\int_{0}^{\infty}S(t-u)\,dL(u)$.
\end{example}

\begin{definition}\label{def:K}
 Let $\mu$ be a Radon probability measure on $E$ satisfying
$\wh{\mu}(x\s)\not=0$ for all $x\s\in E\s$.
For each Borel function $\phi:E\to \R$ we define the function
$K_{\mu,\phi}:E\to \C$ by $$K_{\mu,\phi}(x) := \frac{\exp(i\phi(x))}{\wh
\mu(\phi)}.$$
\end{definition}

We call $K_{\mu,\phi}$ an {\it exponential martingale vector}.

\begin{proposition}\label{prop:K} Let $\mu_1$ and $\mu_2$ be Radon probability
measures on $E_1$ and $E_2$,
respectively,  with
$\wh{\mu_2}(x\s)\not=0$ for all $x\s\in E_2\s$.
Let $T$ be a skew map with respect to the pair $(\mu_1,\mu_2)$.
Then for all $x\s\in E_2\s$
we have
\begin{align}\label{eq:identityPTK}
 P_TK_{\mu_2,x\s} = K_{\mu_1, x\s\circ T}.
\end{align}

\end{proposition}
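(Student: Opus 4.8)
The plan is to verify the identity \eqref{eq:identityPTK} by a direct computation, using the fact that each exponential martingale vector $K_{\mu_2,x\s}$ is a bounded Borel function on $E_2$ — indeed $|K_{\mu_2,x\s}|\equiv |\wh{\mu_2}(x\s)|^{-1}$ is constant — so that the explicit formula for $P_T$ from Proposition \ref{prop:Lp} applies pointwise, with no approximation argument needed. Fix $x\s\in E_2\s$ and $x\in E_1$, and write $\phi:=x\s\circ T$, a Borel function on $E_1$; note that this is exactly the function $\phi$ appearing in $K_{\mu_1,x\s\circ T}=K_{\mu_1,\phi}$.

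The key steps are the following. First, substitute the definition of $K_{\mu_2,x\s}$ into $P_TK_{\mu_2,x\s}(x)=\int_{E_2}K_{\mu_2,x\s}(T(x)+y)\,d\rho(y)$ and use multiplicativity of the exponential, $\exp(i\la T(x)+y,x\s\ra)=\exp(i\la T(x),x\s\ra)\exp(i\la y,x\s\ra)$, to pull the factor $\exp(i\phi(x))/\wh{\mu_2}(x\s)$ outside the integral; the remaining integral is precisely $\wh\rho(x\s)$. Second, identify the scalar $\wh\rho(x\s)/\wh{\mu_2}(x\s)$ with $\wh{\mu_1}(\phi)^{-1}$: taking characteristic functions in the defining relation $T(\mu_1)*\rho=\mu_2$ gives $\wh{T(\mu_1)}(x\s)\,\wh\rho(x\s)=\wh{\mu_2}(x\s)$, while the image-measure change-of-variables formula (recall $T(\mu_1)=\mu_1\circ T^{-1}$) gives $\wh{T(\mu_1)}(x\s)=\int_{E_1}\exp(i\la T(x),x\s\ra)\,d\mu_1(x)=\wh{\mu_1}(\phi)$. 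Combining the two steps yields $P_TK_{\mu_2,x\s}(x)=\exp(i\phi(x))/\wh{\mu_1}(\phi)=K_{\mu_1,\phi}(x)$, which is the assertion.

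There is no genuine obstacle here; the only point requiring care is that $\wh{\mu_1}(\phi)=\wh{T(\mu_1)}(x\s)$ is nonzero — so that $K_{\mu_1,\phi}$ is well defined — which is exactly the observation recorded just after Definition \ref{def:skew}: the product $\wh{T(\mu_1)}(x\s)\wh\rho(x\s)=\wh{\mu_2}(x\s)\neq 0$ forces each factor to be nonzero. Measurability and Fubini issues do not arise since all integrands are bounded. Finally, since the elementary formula for $P_T$ and its $L^p$-extension in Proposition \ref{prop:Lp} agree on $B_{\rm b}(E_2)$ and $K_{\mu_2,x\s}\in B_{\rm b}(E_2)$, the identity \eqref{eq:identityPTK} holds both as an identity of bounded functions and in $L^p(E_1,\mu_1)$ for every $1\le p<\infty$.
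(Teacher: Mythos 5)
Your proposal is correct and follows essentially the same route as the paper's own proof: a direct pointwise computation of $P_TK_{\mu_2,x\s}(x)$ using multiplicativity of the exponential, identification of the remaining integral as $\wh\rho(x\s)$, and the relation $\wh{\mu_2}(x\s)=\wh{\mu_1}(x\s\circ T)\,\wh\rho(x\s)$ coming from $T(\mu_1)*\rho=\mu_2$. The additional remarks on well-definedness of $K_{\mu_1,x\s\circ T}$ and on boundedness of the integrand are correct but only make explicit what the paper leaves implicit.
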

\begin{proof}
Let $\rho$ denote the associated
skew convolution factor. From the identity
$$ \wh \mu_2(x\s) = \wh{T(\mu_1)}(x\s) \wh\rho(x\s) = \wh{\mu_1}(x\s\circ T)
\wh\rho(x\s)$$
we deduce that for all $x \in E$
$$
\begin{aligned}
P_T K_{\mu_2,x\s}(x)
& = \int_{E_2} K_{\mu_2,x\s}(T(x)+y)\,d\rho(y)
\\ & = \frac{\exp(i\la T(x), x\s\ra)}{ \wh \mu_2(x\s)}\int_{E_2} \exp(i\la y,
x\s\ra)\,d\rho(y)
\\ & = \frac{\exp(i\la T(x), x\s\ra)}{ \wh \mu_2(x\s)} \wh \rho(x\s)
 = \frac{ \exp(i( x\s\circ T)(x))}{\wh\mu_1( x\s\circ T)}
 = K_{\mu_1,x\s\circ T}(x).
\end{aligned}
$$
\end{proof}

Fix a Radon probability measure $\mu$ on $E$ and let $\mathscr{E}_{\mu}$ denote
the linear span of the set of
exponential martingale vectors $\{K_{\mu,x\s}; x\s\in E\s\}$.
The proposition
implies that, under the stated hypotheses on $\mu_1$, $\mu_2$, and $T$,
the mapping
$$ K_{\mu_2,x\s} \mapsto K_{\mu_1, x\s\circ T}$$
has a well-defined linear extension to a contraction from
$\mathscr{E}_{\mu_2}\to \mathscr{E}_{\mu_1}$
(this extension being also denoted $P_T$).
Under suitable assumptions on the measures one may
show that the functions $K_{\mu,x\s}$ are in fact linearly independent. This
fact is of some
interest by itself but is not needed
here; therefore we have included it in an appendix at the end of this paper.
Using the injectivity of the Fourier transform, a standard argument shows that
$\mathscr{E}_{\mu}$ is dense in $L^p(E,\mu;\C)$
for all $1\le p<\infty$ (see e.g. \cite[Lemma 5.3.1]{App2}), and consequently
$P_T$ is the
unique such extension.

\section{Second quantisation: The Gaussian case}\label{sec:Gaussian}

In this section we connect, in the Gaussian setting, the notions of skew
operators with
second quantisation. The presentation is slightly different from the usual one,
in that we introduce a form of the chaos expansion that utilises iterated
Malliavin
derivatives that was introduced by Stroock \cite{St}. This
approach will bring out the analogies between the Gaussian and the Poisson case
(which we present in the next section) very elegantly.

We begin by recalling some standard results from the theory of Gaussian
measures.
Proofs and more details can be found in the monographs \cite{Bo, Nu, VTC}.

Let $\mu$ be a Gaussian measure on the real
Banach space $E$, and let $H$ denote its reproducing
kernel Hilbert space,
which is defined as follows. The covariance operator $Q$ of $\mu$ is given by
$$ Qx\s = \int_E \lb x,x\s\rb x\,\mu(dx), \quad x\s\in E\s.$$
This integral is known to be absolutely convergent in $E$ and defines a bounded
operator $Q\in \calL(E\s,E)$
which is positive in the sense that $\lb Qx\s,x\s\rb \ge 0$ for all $x\s\in E\s$
and symmetric
in the sense that $\lb Qx\s, y\s\rb = \lb Qy\s, x\s\rb$ for all $x\s,y\s\in
E\s$.
The mapping $(Qx\s, Qy\s) \mapsto \lb Q x\s,y\s\rb$ defines an inner product on
the range of $Q$.
The real Hilbert space $H$ is defined to be the completion of the range of $Q$ with
respect to this
inner product. The identity mapping $Qx\s \mapsto Qx\s$ extends to a bounded
injective operator
$j: H\to E$, and we have the factorisation $Q = j\circ j\s$.
Here we have identified $H$ and its dual via the Riesz representation theorem.

Each element $h\in H$ of the form $h=j\s x\s$ defines a real-valued
function $\phi_h\in L^2(E,\mu)$ by
$ \phi_h(x) := \lb x,x\s\rb$,
and we have
$$ \n \phi_h\n_{L^2(E,\mu)}^2 = \int_E \lb x,x\s\rb^2\,d\mu(x)
= \n j\s x\s\n_{H}^2 = \n h\n_H^2.
$$
Since $j\s$ has dense range in $H$, the mapping
$h\mapsto \phi_h$ uniquely extends to an isometry from $H$ into $L^2(E,\mu)$.

\medskip
Suppose now that
$\mu_1$ and $\mu_2$ are Gaussian Radon measures on Banach spaces
$E_1$ and $E_2$, with reproducing kernel Hilbert
spaces $H_1$ and $H_2$ respectively.
In the next two Propositions \ref{prop:Gauss1} and \ref{prop:Gaussian-barT} we shall investigate
the relationship between linear skew maps from $E_1$ to $E_2$ with respect to
the pair $(\mu_1,\mu_2)$ and linear contractions from $H_1$ to $H_2$.

We begin by proving that if $T$
is a skew operator with respect to the pair $(\mu_1,\mu_2)$,
then $T$ restricts to a contraction between the reproducing kernel Hilbert
spaces.
This result and its proof extend a similar result for semigroup operators in
\cite{CMG1, VN1}.

\begin{proposition}\label{prop:Gauss1}
 If $T$ is a bounded linear operator from $E_1$ to $E_2$ which
is a skew operator with respect to the pair $(\mu_1,\mu_2)$
of Gaussian measures, then $T$ restricts to a contraction from $H_1$ to $H_2$.
\end{proposition}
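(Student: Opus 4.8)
The plan is to exploit the identity relating characteristic functions,
$\wh{\mu_2}(x\s) = \wh{\mu_1}(x\s\circ T)\,\wh\rho(x\s)$ for all $x\s\in E_2\s$,
and to extract from it a comparison of the covariance operators of $\mu_1$ and $\mu_2$. Writing $Q_1\in\calL(E_1\s,E_1)$ and $Q_2\in\calL(E_2\s,E_2)$ for the covariances and using the Gaussian form $\wh{\mu_i}(x\s)=\exp(i\la m_i,x\s\ra - \tfrac12\la Q_i x\s,x\s\ra)$, the factorisation identity becomes, on the level of quadratic forms, $\la Q_2 x\s, x\s\ra = \la Q_1(x\s\circ T), x\s\circ T\ra + \la Q_\rho x\s, x\s\ra$ (after checking that $\rho$ is itself Gaussian — which follows because $\wh\rho$ is a quotient of two Gaussian characteristic functions and hence is again of Gaussian type, a standard fact; alternatively one only needs that $\la Q_\rho x\s,x\s\ra := \la Q_2 x\s,x\s\ra - \la Q_1 T\s x\s, T\s x\s\ra$ is nonnegative, which is forced by $\rho$ being a probability measure). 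Here $T\s:E_2\s\to E_1\s$ is the Banach-space adjoint and $x\s\circ T = T\s x\s$. The key inequality extracted is therefore
$$ \la Q_1 T\s x\s, T\s x\s\ra \le \la Q_2 x\s, x\s\ra, \qquad x\s\in E_2\s. $$

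Next I would translate this into a statement about the reproducing kernel Hilbert spaces via the factorisations $Q_i = j_i\circ j_i\s$ with $j_i:H_i\hookrightarrow E_i$. The inequality above reads $\n j_1\s T\s x\s\n_{H_1}^2 \le \n j_2\s x\s\n_{H_2}^2$ for all $x\s\in E_2\s$. One then checks the algebraic identity $j_1\s T\s = (T\circ j_1)\s$ as a map $E_2\s\to H_1$, and compares it with $j_2\s:E_2\s\to H_2$; the displayed inequality says precisely that there is a well-defined contraction $V:\mathrm{ran}(j_2\s)\to H_1$ with $V(j_2\s x\s) = j_1\s T\s x\s$, since the map is norm-nonincreasing on a set on which it is a priori only defined up to the kernel of $j_2\s$, and that kernel is annihilated automatically. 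Because $j_2\s$ has dense range in $H_2$, $V$ extends to a contraction $V:H_2\to H_1$. Taking Hilbert-space adjoints gives a contraction $V\s:H_1\to H_2$, and the remaining task is to identify $V\s$ with the restriction of $T$ to $H_1$, i.e. to show $j_2\circ V\s = T\circ j_1$ as maps $H_1\to E_2$; this is a routine duality computation: for $h\in H_1$ and $x\s\in E_2\s$, $\la j_2 V\s h, x\s\ra = \la V\s h, j_2\s x\s\ra_{H_2} = \la h, V j_2\s x\s\ra_{H_1} = \la h, j_1\s T\s x\s\ra_{H_1} = \la j_1 h, T\s x\s\ra = \la T j_1 h, x\s\ra$, and one concludes by the Hahn–Banach separation of points in $E_2$.

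The main obstacle I anticipate is the passage from the characteristic-function identity to a clean operator inequality, specifically justifying that the "remainder" $\la Q_2 x\s,x\s\ra - \la Q_1 T\s x\s,T\s x\s\ra$ is nonnegative — equivalently that $\rho$ has a genuine Gaussian covariance — without circular reasoning. The cleanest route is: $\wh\rho = \wh{\mu_2}/\wh{T(\mu_1)}$ is continuous, never zero, and equals $\exp(i\la m_\rho,\cdot\ra - \tfrac12 q(\cdot))$ with $q(x\s) = \la Q_2 x\s,x\s\ra - \la Q_1 T\s x\s, T\s x\s\ra$; since $\wh\rho$ is the characteristic function of a probability measure, $q$ must be nonnegative-definite (by Bochner-type positivity, or simply because $|\wh\rho(x\s)|\le 1$ forces $q(x\s)\ge 0$). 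Once that nonnegativity is in hand, everything else is bookkeeping with the factorisations $Q_i=j_ij_i\s$ and adjoints. A secondary, purely technical point is that $T\s$ need not map $E_2\s$ into the range of $j_1\s$ a priori, but the inequality $\n j_1\s T\s x\s\n_{H_1}^2\le\n j_2\s x\s\n_{H_2}^2 < \infty$ shows $j_1\s T\s x\s$ is well-defined in $H_1$ for every $x\s$, which is all that is needed.
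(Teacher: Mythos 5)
Your argument is correct and reaches the conclusion by essentially the same two-step route as the paper: first the covariance domination $\la Q_1T\s x\s,T\s x\s\ra\le\la Q_2x\s,x\s\ra$, equivalently $\n j_1\s T\s x\s\n_{H_1}\le\n j_2\s x\s\n_{H_2}$, and then a well-definedness/density/duality argument with the factorisations $Q_i=j_i\circ j_i\s$. The one genuine difference is how the key inequality is obtained. The paper computes second moments of $\mu_2=T\mu_1*\rho$ directly (the cross term vanishing because $T\mu_1$ is centred), concludes that $R:=Q_2-TQ_1T\s$ is positive and symmetric, and then invokes a tightness theorem to deduce that $R$ is the covariance of a Gaussian measure, hence that $\rho$ is Gaussian; only the positivity of $R$ is needed for the proposition, the Gaussianity of $\rho$ being a by-product that the paper uses later. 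You instead read the inequality off the modulus of $\wh\rho=\wh{\mu_2}/\wh{T(\mu_1)}$: for centred Gaussians this quotient is $\exp(-\frac12 q(x\s))$ with $q(x\s)=\la Q_2x\s,x\s\ra-\la Q_1T\s x\s,T\s x\s\ra$, and $|\wh\rho(x\s)|\le 1$ forces $q\ge 0$. This is leaner for the statement at hand (no cross-term computation, no Gaussian tightness result, no need to know $\rho$ is Gaussian), at the cost of not delivering the Gaussianity of $\rho$. The second half of your argument --- define the contraction $V:j_2\s x\s\mapsto j_1\s T\s x\s$ on the dense range of $j_2\s$, extend, take the Hilbert-space adjoint $V\s:H_1\to H_2$, and identify $j_2\circ V\s=T\circ j_1$ by duality --- is the paper's construction of the functionals $\psi_{y\s}$ in adjoint form (indeed $\psi_{y\s}=V\s j_1\s y\s$), so that part is the same argument. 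One cosmetic caveat: the paper's $Q$ is the second-moment operator and the Gaussian measures are implicitly centred, so you should take $m_i=0$ in your characteristic-function formula to match the definition of the spaces $H_i$.
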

\begin{proof}
By assumption we have $T\mu_1 * \rho = \mu_2$ for some Radon probability measure
$\rho$.
We claim that $\rho$ is Gaussian. Indeed, using the fact that $T\mu_1$ has mean
zero, we have
$$
\begin{aligned}
\int_E  \lb x,x\s\rb^2\, \mu_2(dx)
& =  \int_E \int_E  \lb Tx+y,x\s\rb^2\, \mu_1(dx)\,\rho(dy)\\ & =
\int_E  \lb x,x\s\rb^2\, T\mu_1(dx) + \int_E \lb y,x\s\rb^2\, \rho(dy).
\end{aligned}
$$
Hence, denoting the covariances of $\mu_1$ and $\mu_2$ by $Q_1$ and $Q_2$
(respectively), we see that
the operator $R:= Q_2 - TQ_1T\s$  is positive and symmetric as an operator from
$E_2\s$ to $E_2$.
Since $R\le Q_2$, a well-known tightness result for Gaussian measures implies
that
$R$ is the covariance of a Gaussian Radon measure $\tilde \rho$ on $E_2$. The
identity $TQ_1T\s + R = Q_2$
implies $T\mu_1* \tilde\rho = \mu_2$. Since $\mu_2$ is a  Gaussian measure, its
characteristic function
vanishes nowhere and hence, by the observation following Definition
\ref{def:skew}, $\rho = \tilde \rho$. This proves the claim.

Recall that $Q_1 = j_1\circ j_1\s$, where $j_1: H_1\embed E$ is the canonical
inclusion mapping,
and likewise we have
$Q_2 = j_2 \circ j_2\s$ and $R = j_R \circ j_R\s$.
For all $x\s\in E\s$ we have
\begin{equation}\label{eq:T1}
\begin{aligned}
\n j_1\s T\s x\s\n_{H_1}^2
&=\lb TQ_1 T\s x\s, x\s\rb
\\ &=\lb Q_2  x\s,  x\s\rb-\lb R\s x\s,  x\s\rb
\le \lb Q_2 \s x\s,  x\s\rb  = \n j_2\s x\s\n_{H_2}^2.
\end{aligned}
\end{equation}Hence,
\begin{equation}\label{eq:T2}
\begin{aligned}
|\lb Q_1 T\s x\s,  y\s\rb|
=|[ j_1\s T\s x\s, j_1\s y\s]_{H_1} |
\le \n j_2\s
x\s\n_{H_2} \n j_1\s y\s\n_{H_1}.\end{aligned}
\end{equation}
Define a linear functional  $\psi_{y\s}$ on the range of $ j_2\s$ by
$$\psi_{y\s}(j_2\s x\s) :=\lb Q_1 T\s x\s,  y\s\rb.$$
If $j_2\s x\s=0$, then $j_1\s T\s x\s=0$ by
\eqref{eq:T1}, so
$\psi_{y\s}$ is well-defined.
By \eqref{eq:T2}, $\psi_{y\s}$ extends to a
bounded linear functional on $H_2$ of norm $\le \n j_1\s y\s\n_{H_1}$.
Identifying $\psi_{y\s}$ with an element of $H_2$, for all $x\s\in E\s$
we have
$$\lb j_2 \psi_{y\s}, x\s\rb =
[ j_2\s x\s, \psi_{y\s}]_{H_2} = \lb Q_1 T\s x\s,  y\s\rb  =
\lb T Q_1\s y\s,x\s\rb.$$
Hence, $T Q_1 y\s = j_2 \psi_{y\s}$
and $\n T Q_1 y\s\n_{H_2}
\le \n
j_1\s y\s\n_{H_1}$.
Writing $Q_1 = j_1 j_1\s$ we see that the restriction of $T|_{H_1}$ to $H_1$
maps $j_1\s y\s $ to the element $ j_2 \psi_{y\s}$ of $H_1$, and that
$T|_H$ is contractive on the dense range of $j_1\s$ in $H_1$. This gives the
result.
\end{proof}

In the converse direction we have the following result.

\begin{proposition}\label{prop:Gaussian-barT}
 Suppose $T: H_1\to H_2$ is a linear contraction. Then $T$ admits a linear Borel
measurable
extension $\bar T: E_1\to E_2$ with the following properties:
\begin{enumerate}
 \item the image measure $\bar T(\mu_1)$ is a Gaussian Radon measure;
 \item there exists a Gaussian Radon measure $\rho$ on $E_2$ such that $\bar
T(\mu_1)*\rho = \mu_2$.
\end{enumerate}
In particular, $\bar T$ is a linear skew map for the pair $(\mu_1,\mu_2)$.
\end{proposition}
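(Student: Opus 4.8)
The plan is to realise $\bar T$ as the \emph{measurable linear extension} of the composition $S:=j_2\circ T\colon H_1\to E_2$, and then to carry the covariance computation of Proposition~\ref{prop:Gauss1} over almost verbatim. The first ingredient is that the canonical inclusion $j_2\colon H_2\embed E_2$ is $\gamma$-radonifying: this is a restatement of the fact that $H_2$ is the reproducing kernel Hilbert space of the Gaussian Radon measure $\mu_2$, i.e. that the image of a standard cylindrical Gaussian on $H_2$ under $j_2$ is the genuine (Radon) measure $\mu_2$, with covariance $Q_2=j_2 j_2\s$. Since $T\colon H_1\to H_2$ is bounded, the right-ideal property of $\gamma$-radonifying operators shows that $S=j_2\circ T\colon H_1\to E_2$ is $\gamma$-radonifying as well, with $\gamma$-norm at most $\n T\n\le 1$ times that of $j_2$.

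Next I would build $\bar T$. The space $H_1$ is separable since $\mu_1$ is Radon, so we may choose $x_n\s\in E_1\s$ such that $(e_n)_{n\ge1}:=(j_1\s x_n\s)_{n\ge1}$ is an orthonormal basis of $H_1$; then $(\la\cdot,x_n\s\ra)_{n\ge1}$ is a sequence of i.i.d.\ standard Gaussian random variables on $(E_1,\mu_1)$, because $\int_{E_1}\la x,x_m\s\ra\la x,x_n\s\ra\,d\mu_1(x)=[j_1\s x_m\s, j_1\s x_n\s]_{H_1}=\delta_{mn}$. Since $S$ is $\gamma$-radonifying, the series $\sum_{n\ge1}\la x,x_n\s\ra\,S e_n$ converges in $E_2$ for $\mu_1$-almost every $x\in E_1$; I set $\bar T(x)$ equal to this sum on the Borel linear subspace $E_1^\circ$ of convergence and $\bar T(x):=0$ for $x\notin E_1^\circ$, so that $\bar T\colon E_1\to E_2$ is Borel measurable and linear on the subspace $E_1^\circ$ of full $\mu_1$-measure. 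For $x=j_1 h$ with $h\in H_1$ one has $\la j_1 h,x_n\s\ra=[h,e_n]_{H_1}$, hence $\bar T(j_1 h)=\sum_n[h,e_n]_{H_1}S e_n=Sh=j_2(Th)$, so $\bar T$ genuinely extends $T$. Moreover $\bar T(\mu_1)$ is the law of the convergent Gaussian series $x\mapsto\sum_n\la x,x_n\s\ra S e_n$, which by the standard theory of measurable linear extensions (see e.g. \cite{Bo, VN1}) is the centred Gaussian Radon measure on $E_2$ with covariance $S S\s=j_2 T T\s j_2\s$; in particular claim~(1) holds.

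For claim~(2) I would repeat the argument of Proposition~\ref{prop:Gauss1}. Because $T$ is a contraction, $T T\s\le I_{H_2}$, so the operator $R:=Q_2-S S\s=Q_2-j_2 T T\s j_2\s\colon E_2\s\to E_2$ is positive, symmetric, and satisfies $R\le Q_2$. By the same tightness result for Gaussian measures invoked in Proposition~\ref{prop:Gauss1}, $R$ is the covariance of a centred Gaussian Radon measure $\rho$ on $E_2$. Then $\bar T(\mu_1)*\rho$ is a centred Gaussian Radon measure whose covariance is $S S\s+R=Q_2$ (covariances add under convolution), and since a centred Gaussian measure is determined by its covariance (equality of characteristic functions, together with injectivity of the Fourier transform) we conclude $\bar T(\mu_1)*\rho=\mu_2$, which is claim~(2). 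Finally, as $\mu_2$ is Gaussian its characteristic function vanishes nowhere, so by Definition~\ref{def:skew} the linear Borel map $\bar T$ is a skew map for the pair $(\mu_1,\mu_2)$, completing the proof.

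The part I expect to be the real obstacle is the first two steps, i.e. the measurable linear extension machinery: one must know that $j_2$ is $\gamma$-radonifying (this is precisely where the Radonness of $\mu_2$ enters), that this property is inherited by $S=j_2\circ T$, and that the resulting a.e.-convergent linear series defines a Borel map whose push-forward under $\mu_1$ is a \emph{Radon} Gaussian measure with covariance $S S\s$. One should also be slightly careful that $\bar T$, a priori only linear off a $\mu_1$-null set, is a bona fide Borel map, its values on that exceptional set being irrelevant to (1) and (2). Once $\bar T$ and the identity ``covariance of $\bar T(\mu_1)$ equals $SS\s$'' are in hand, the remainder — the comparison $SS\s\le Q_2$, the tightness argument producing $\rho$, and recognising the convolution — is a routine repetition of computations already carried out in Proposition~\ref{prop:Gauss1}.
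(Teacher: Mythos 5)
Your proof is correct and follows essentially the same route as the paper: the paper simply cites the general theory of Gaussian measures (Bogachev, Feyel--de La Pradelle) for the existence of the measurable linear extension $\bar T$ with covariance $j_2TT\s j_2\s$, and then runs exactly your final computation with $R:=Q_2-j_2TT\s j_2\s$. The only difference is that you construct the extension explicitly via a $\gamma$-radonifying series argument where the paper defers to the literature; this is a sound (and standard) way to supply those cited facts.
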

\begin{proof}
The following facts follows from the general theory of Gaussian measures (see,
e.g., \cite{Bo, FP}):
\begin{enumerate}
 \item the mapping $T:H_1\to H_2$ admits an extension to a linear Borel mapping
$\bar T: E_1\to E_2$;
 \item the operator $Q = j_2 T T\s j_2\s$ is the covariance of a Gaussian
measure
$\mu$ on $E_2$;
 \item $\mu$ coincides with the image measure $\bar T(\mu_1)$.
\end{enumerate}
In terms of
the covariance operators $Q$ and $Q_2$ of $\mu$ and $\mu_2$ we have
$$ \lb Q  x\s,x\s\rb = \n T\s j_2\s x\s\n_{H_1}^2 \le \n j_2\s x\s\n_{H_2}^2 =
\lb Q_2 x\s, x\s\rb.$$
Hence the positive symmetric operator
$ R:= Q_2 - Q$ is the covariance of a Gaussian measure $\rho$
for which we have
$ \bar T(\mu_1) * \rho  = \mu * \rho = \mu_2.$
\end{proof}

Our next objective is to relate the abstract second quantisation procedure of the previous
section to the Wiener-It\^o decompositions of $L^2(E_1,\mu_2)$ and $L^2(E_2,\mu_2)$.

Following the presentation in \cite{Nu}, for each $n\ge 1$ we define
 ${\H}_{n}$ to be the closed linear subspace of $L^2(E,\mu)$ spanned by
the functions $H_n(\phi_h)$, where $h\in H$ has norm one and $H_n$ is the $n$-th
Hermite polynomial given by the generating function expansion
$$ \exp\big(tx - \frac12t^2\big) = \sum_{n=0}^\infty \frac{t^n}{n!} H_n(x).$$
The Wiener-It\^o decomposition theorem
asserts that we have an orthogonal direct sum decomposition
$$
L^2(E,\mu)=\bigoplus_{n\ge 0}\H_n.
$$
Let $S_n$ be the permutation group on $n$ elements.
The range of the symmetrising projection $\Sigma_n: H^{\otimes n}\to H^{\otimes
n}$ defined by
$$ \Sigma_n(h_1\otimes \hdots \otimes h_n) : = \sum_{\sigma\in S_n}
(h_{\sigma(1)}\otimes \hdots \otimes h_{\sigma(n)})
$$
is denoted by $H^{\odot n}$ and is called the {\em $n$-fold symmetric tensor
product} of $H$.
Let $(h_n)_{n\ge 1}$ be an orthonormal basis of $H$ (the Hilbert space $H$,
being a reproducing kernel
Hilbert space of a Gaussian Radon measure, is separable (see e.g. \cite{Bo})).

Consider the $n$-fold stochastic integral $I_n: H^{\odot n}\to \H_n$, defined by
$$ I_n \big(\Sigma_n(h_{j_1}^{\otimes k_1}\otimes \hdots \otimes
h_{j_m}^{\otimes k_m})\big)
:=  \prod_{l=1}^m H_{k_l}(\phi_{h_{j_l}})$$
with $j_1< \dots < j_m$ and $k_1+\dots+k_m = n$. Then $\frac1{\sqrt{n!}} I_n$
sets up an isometric
isomorphism $H^{\odot n}\simeq \H_n$. Stated differently,
the mapping $I = \bigoplus_{n=0}^\infty \frac1{\sqrt{n!}} I_n$
defines an isometric isomorphism
$$ L^2(E,\mu) \simeq \Gamma(H),$$
where $$\Gamma(H):= \bigoplus_{n=0}^\infty H^{\odot n}$$
with norm $\n (h_n)_{n=0}^\infty\n_{\Gamma(H)}^2 = \sum_{n=0}^\infty\n h_n\n_{H^{\odot n}}^2$
 is the {\em symmetric Fock space} over $H$.

For a function $f: E\to \R$ of the form $$f = g(\phi_{h_1}, \dots,\phi_{h_n})$$
with $h_1,\dots,h_n$ orthonormal in $H$ and $g: \R^n\to \C$ of class $C^1$,
we define the {\em Malliavin derivative in the direction of $H$}
as the function $D f :E \to H$ given by
$$ D f = \sum_{j=1}^n \partial_j g(\phi_{h_1}, \dots,\phi_{h_n}) \otimes h_j.$$
As is well known (see e.g. \cite{Nu}), for all $1\le p<\infty$ the linear
operator $D$ is closable and densely defined
from $L^p(E,\mu)$ to $L^p(E,\mu;H)$.
From now on we will denote its closure by $D$ as well, and denote the domain of
its closure
by $W^{1,p}(E,\mu)$.
The higher order derivatives $D^k f: E\to H^{\otimes k}$ are defined recursively
by
$D^k f := D(D^{k-1}f)$. These operators are closable as well and the domains of
their closures
will be denoted by $W^{k,p}(E,\mu)$. We define the spaces $W^{\infty, p}(E,
\mu):
= \bigcap_{k \in \mathbb{N}}W^{k,p}(E,\mu)$.

The next proposition is due to Stroock \cite{St} in the context of an abstract
Wiener space.
We give a different proof for Gaussian measures on Banach
spaces.
We write $\E_\mu f = \int_E f\,d\mu$.

\begin{proposition}\label{prop:LP-Gaussian}
 The space $W^{\infty, 2}(E, \mu)$ is dense in $L^2(E,\mu)$  and for all $f \in
W^{\infty, 2}(E, \mu)$
we have
$$ f =  \sum_{n=0}^\infty \frac1{n!} I_n (\E_\mu D^n f).$$
\end{proposition}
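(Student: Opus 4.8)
The plan is to prove the series identity first on a conveniently chosen dense subspace and then to extend it to all of $W^{\infty,2}(E,\mu)$ by a localisation-in-Wiener-chaos argument; the density assertion falls out along the way. For the dense subspace I would take $\mathcal{A}$, the linear span of the functions $H_m(\phi_h)$ with $m\ge 0$, $h\in H$ and $\n h\n_H=1$. By the very definition of the spaces $\H_m$ and the Wiener-It\^o decomposition $L^2(E,\mu)=\bigoplus_{m\ge 0}\H_m$, the span $\mathcal{A}$ is dense in $L^2(E,\mu)$. Moreover every element of $\mathcal{A}$ is a polynomial in a single Gaussian random variable $\phi_h$, hence a smooth cylindrical function, so it lies in $W^{\infty,2}(E,\mu)$, its Malliavin derivatives being computed by the chain rule. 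This already yields the density of $W^{\infty,2}(E,\mu)$ in $L^2(E,\mu)$.

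Next I would verify the formula on $\mathcal{A}$. By linearity it suffices to take $f=H_m(\phi_h)$ with $\n h\n_H=1$. Using $D\phi_h=h$ (a constant element of $H$) together with $H_m'=mH_{m-1}$, one finds
$$
D^k H_m(\phi_h)=\frac{m!}{(m-k)!}\,H_{m-k}(\phi_h)\otimes h^{\otimes k}\ \text{ for }0\le k\le m,\qquad D^kH_m(\phi_h)=0\ \text{ for }k>m.
$$
Taking expectations and using $\int_E H_{m-k}(\phi_h)\,d\mu=\delta_{k,m}$ gives $\E_\mu D^k H_m(\phi_h)=\delta_{k,m}\,m!\,h^{\otimes m}$, so the right-hand side of the claimed identity collapses to the single term $\frac1{m!}I_m(m!\,h^{\otimes m})=I_m(h^{\otimes m})$. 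Choosing the orthonormal basis of $H$ used to define $I_m$ so that $h$ is its first vector (this alters neither $\H_m$ nor $I_m$), the definition of $I_m$ gives $I_m(h^{\otimes m})=H_m(\phi_h)=f$. The only structural input here is that $\frac1{\sqrt{m!}}I_m$ is the isometric isomorphism $H^{\odot m}\simeq\H_m$; the bookkeeping of the combinatorial constants is routine.

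For the passage from $\mathcal{A}$ to a general $f\in W^{\infty,2}(E,\mu)$ the difficulty is that the maps $f\mapsto\E_\mu D^n f$ are not bounded on $L^2(E,\mu)$, so one cannot extend by density directly. I would localise to finitely many chaoses. Let $P_{\le N}$ be the orthogonal projection of $L^2(E,\mu)$ onto $\bigoplus_{m\le N}\H_m$. Two standard facts about the interaction of $D$ with the Wiener-It\^o grading are needed: (a) $D^k$ maps the degree-$m$ chaos into the $H^{\otimes k}$-valued chaos of degree $m-k$, and is bounded on each chaos (a consequence of Meyer's inequalities, or of the explicit chaos expansion); (b) consequently $\E_\mu\circ D^n$ annihilates $\H_m$ for $m\ne n$. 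From (a), each $\H_m$ lies in $W^{\infty,2}(E,\mu)$, the operators $D^n$ and hence $f\mapsto\frac1{n!}I_n(\E_\mu D^nf)$ are bounded on $\bigoplus_{m\le N}\H_m$ (and the latter vanishes for $n>N$), and $P_{\le N}f\to f$ in every $W^{k,2}$-norm because $D^k$ is compatible with the grading. Both sides of the identity are therefore bounded operators on $\bigoplus_{m\le N}\H_m$ that agree on the dense subspace $\mathcal{A}\cap\bigoplus_{m\le N}\H_m$ by the previous paragraph, hence agree on $\bigoplus_{m\le N}\H_m$; applying this to $P_{\le N}f$ and using (b) in the form $\E_\mu D^nP_{\le N}f=\E_\mu D^n f$ for $N\ge n$ yields
$$
P_{\le N}f=\sum_{n=0}^N\frac1{n!}I_n(\E_\mu D^n f).
$$
Since the terms on the right are mutually orthogonal and the partial sums have $L^2$-norm at most $\n f\n_{L^2(E,\mu)}$, the series $\sum_{n\ge 0}\frac1{n!}I_n(\E_\mu D^nf)$ converges in $L^2(E,\mu)$, and letting $N\to\infty$ identifies its sum with $f$.

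The main obstacle is this last extension step: everything hinges on exploiting the compatibility of $D$ with the Wiener-It\^o grading (boundedness of $D$ on each chaos, and the fact that $\E_\mu\circ D^n$ only detects the $n$-th chaos), since the operators $\E_\mu D^n$ are themselves unbounded on $L^2(E,\mu)$. The computation on $\mathcal{A}$ is elementary, but must be carried out carefully enough to match the normalisation implicit in the definition of the maps $I_n$.
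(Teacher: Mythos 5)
Your proof is correct, and it differs from the paper's in two respects worth recording. For the dense subspace on which the identity is first verified, the paper uses the exponential vectors $e_h=\exp(\phi_h-\frac12\n h\n_H^2)$: there $D^ne_h=e_h\otimes h^{\otimes n}$, so $\E_\mu D^ne_h=h^{\otimes n}$, and the whole series is resummed in one stroke via the Hermite generating function, $\sum_n\frac1{n!}I_n(h^{\otimes n})=e_h$. You instead test against single Hermite polynomials $H_m(\phi_h)$, where $\E_\mu D^k$ collapses to the single term $k=m$; this is an equally valid computation (your identities $D^kH_m(\phi_h)=\frac{m!}{(m-k)!}H_{m-k}(\phi_h)\otimes h^{\otimes k}$ and $\E_\mu H_j(\phi_h)=\delta_{j,0}$ for $\n h\n_H=1$ are correct, and $I_m(h^{\otimes m})=H_m(\phi_h)$ matches the paper's normalisation of $I_m$ once $\Sigma_m$ is read as the idempotent symmetrising projection). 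The more substantial difference is the extension step. The paper simply says ``the required result follows by density,'' which, taken literally, is too quick for exactly the reason you identify: $f\mapsto\E_\mu D^nf$ is not $L^2$-continuous, so density of the test family in $L^2(E,\mu)$ alone does not transfer the identity to all of $W^{\infty,2}(E,\mu)$. Your chaos-localisation argument --- boundedness of $D^k$ on each $\H_m$, the fact that $\E_\mu\circ D^n$ annihilates $\H_m$ for $m\ne n$, convergence of $P_{\le N}f$ to $f$ in every $W^{k,2}$-norm, and the orthogonality of the resulting partial sums --- supplies precisely the justification the paper leaves implicit (an alternative patch would be to invoke density of the exponential vectors in each graph norm $W^{n,2}$ together with closedness of $D^n$). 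So your route buys a fully rigorous passage from the test family to general $f$, at the cost of importing the standard facts about the interaction of $D$ with the Wiener--It\^o grading; the paper's route is shorter and makes the link with the generating function of the Hermite polynomials explicit, but relies on the reader to fill in the same density argument.
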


\begin{proof}
For each $h\in H$,
the function $e_h: E\to \R$  is defined by $e_h := \exp(\phi_h - \frac12 \n
h\n_H^2)$. It is
well known that the linear span of $\{e_{h}, h \in E\}$ is dense in $L^{2}(E, \mu)$
(see e.g. \cite{Nu} for a proof). Since
$$ D^n e_h = e_h \otimes (\underbrace{h\otimes \hdots\otimes h}_{n \ {\rm
times}})$$ for all $\nN$, the first assertion follows. We clearly have
$$ \E_\mu D^n e_h = \E_\mu e_h \otimes (h\otimes \hdots\otimes h) = h\otimes
\hdots\otimes h.$$
Applying the $n$-fold stochastic integral and using the generating function
identity for the Hermite polynomials
 we obtain
\begin{align*}
\sum_{n=0}^\infty \frac1{n!} I_n (\E_\mu D^n e_h)
& = \sum_{n=0}^\infty  \frac1{n!}I_n(\underbrace{h\otimes \hdots\otimes h}_{n \
{\rm times}})
\\ & = \sum_{n=0}^\infty  \frac1{n!} H_n(\phi_h^n) = \exp\big(\phi_h-\frac12\n
h\n_H^2\big) = e_h,
\end{align*}
and the required result follows by density.
\end{proof}

Let us now return to the setting where $\mu_1$ and $\mu_2$ are
Gaussian measures on $E_1$ and $E_2$, having reproducing kernel Hilbert spaces $H_1$ and $H_2$,
respectively.
In order to avoid unnecessary notational complexity we will use the notation $D$
for Malliavin derivatives
acting on both $L^2(E_1,\mu_1)$ and $L^2(E_2,\mu_2)$, and define
$$D_h f := [Df, h].$$

\begin{lemma}\label{lem:PT-Gaussian}
Let $T: H_1\to H_2$ be a linear contraction. Then for all $f\in W^{n,2}(E_2,\mu_2)$ and
$h_1,\dots h_n\in H_1$,
$$
\E_{\mu_1} D_{h_1,\dots,h_n}^n P_T f = \E_{\mu_2} D_{Th_1, \dots,Th_n}^n f.
$$
\end{lemma}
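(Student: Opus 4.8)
The plan is to verify the identity first for the exponential functions $e_h := \exp(\phi_h - \tfrac12\n h\n_{H_2}^2)$, $h\in H_2$, for which both sides can be computed in closed form, and then to pass to arbitrary $f\in W^{n,2}(E_2,\mu_2)$ by density and continuity. The decisive preliminary step is to determine the action of $P_T$ on the $e_h$; I claim that
$$ P_T e_h = e_{T\s h}\qquad (h\in H_2),$$
where $e_h$ is viewed in $L^2(E_2,\mu_2)$ and $e_{T\s h}$ in $L^2(E_1,\mu_1)$.

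To establish this I would first take $h = j_2\s x\s$ with $x\s\in E_2\s$. Let $\bar T:E_1\to E_2$ be the Borel extension of $T$ and $\rho$ the skew convolution factor supplied by Proposition \ref{prop:Gaussian-barT}, so that $\rho$ is centred Gaussian with covariance $R = Q_2 - j_2 TT\s j_2\s$. Then $\int_{E_2}\exp(\la y,x\s\ra)\,\rho(dy) = \exp(\tfrac12\la Rx\s,x\s\ra)$ with $\la Rx\s,x\s\ra = \n h\n_{H_2}^2 - \n T\s h\n_{H_1}^2$, and substituting this into the defining formula for $P_T e_h$ and simplifying yields $P_T e_h(x) = \exp(\la\bar T x,x\s\ra - \tfrac12\n T\s h\n_{H_1}^2)$. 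Since by construction $\la\bar T\,\cdot\,,x\s\ra$ and $\phi_{T\s h}$ coincide as elements of $L^2(E_1,\mu_1)$, this is precisely $e_{T\s h}$. The general case $h\in H_2$ then follows because $j_2\s$ has dense range in $H_2$, the maps $h\mapsto e_h$ and $g\mapsto e_g$ are continuous into $L^2(E_2,\mu_2)$ and $L^2(E_1,\mu_1)$ respectively, and $P_T$ and $T\s$ are bounded. (The same identity can alternatively be extracted from Proposition \ref{prop:K}, since $K_{\mu_2,x\s}$ is a scalar multiple of $\exp(i\phi_{j_2\s x\s})$, by analytic continuation in a complex parameter.)

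Granting this, the identity for $f = e_h$ is immediate from $D^n e_g = e_g\otimes g^{\otimes n}$ and $\E_\mu e_g = 1$: indeed
\begin{align*}
\E_{\mu_1}D^n_{h_1,\dots,h_n}P_T e_h
&= \E_{\mu_1}\big[e_{T\s h}\big]\prod_{j=1}^n[T\s h,h_j]_{H_1} = \prod_{j=1}^n[h,Th_j]_{H_2}\\
&= \E_{\mu_2}\big[e_h\big]\prod_{j=1}^n[h,Th_j]_{H_2} = \E_{\mu_2}D^n_{Th_1,\dots,Th_n}e_h.
\end{align*}
To reach a general $f\in W^{n,2}(E_2,\mu_2)$ I would note that $D^k e_{T\s h} = e_{T\s h}\otimes(T\s h)^{\otimes k}$, together with $\E_{\mu_1}e_{T\s h}^2 = \exp(\n T\s h\n_{H_1}^2)$, the bound $\n T\s h\n_{H_1}\le\n h\n_{H_2}$, and the monotonicity of $t\mapsto e^t t^k$ on $[0,\infty)$, give $\n P_T e_h\n_{W^{n,2}(E_1,\mu_1)}\le\n e_h\n_{W^{n,2}(E_2,\mu_2)}$. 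Since the linear span of $\{e_h:h\in H_2\}$ is dense in $W^{n,2}(E_2,\mu_2)$, the operator $P_T$ restricts to a contraction from $W^{n,2}(E_2,\mu_2)$ into $W^{n,2}(E_1,\mu_1)$, compatibly with its $L^2$-level definition. Both sides of the asserted identity are then bounded linear functionals of $f$ on $W^{n,2}(E_2,\mu_2)$ — the right-hand side by contracting $D^n f$ against $Th_1\otimes\cdots\otimes Th_n$ and integrating, the left-hand side by additionally applying the contraction $P_T$ first — and they agree on the dense subspace spanned by the $e_h$, hence everywhere.

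The step I expect to demand the most care is the identification $\la\bar T\,\cdot\,,x\s\ra = \phi_{T\s j_2\s x\s}$ in $L^2(E_1,\mu_1)$: it requires tracking precisely how the Borel extension $\bar T$ of the Hilbert-space contraction $T$ interacts with the adjoint $T\s$, which is built into the construction of $\bar T$ used in Proposition \ref{prop:Gaussian-barT} but ought to be made explicit. A secondary, routine point is the density of the linear span of $\{e_h:h\in H_2\}$ in $W^{n,2}(E_2,\mu_2)$, obtained by realising the Hermite polynomials $H_m(\phi_h)$ as limits in $W^{n,2}$ of difference quotients of $t\mapsto e_{th}$.
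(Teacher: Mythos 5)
Your strategy---verify the identity on the exponential vectors $e_h=\exp(\phi_h-\tfrac12\n h\n_{H_2}^2)$ and extend by density---is genuinely different from the paper's proof, which never touches exponential vectors: the paper instead invokes the commutation relation $DP_Tf=(P_T\otimes T\s)Df$ from \cite{MN} and then computes $\E_{\mu_1}D_hP_Tf$ directly by transporting the double integral over $\mu_1\times\rho$ to an integral over $\mu_2=T\mu_1*\rho$. Your preliminary computation $P_Te_h=e_{T\s h}$ is correct (and the point you flag, $\la\bar T\,\cdot\,,x\s\ra=\phi_{T\s j_2\s x\s}$ in $L^2(E_1,\mu_1)$, does hold for the measurable linear extension of Proposition \ref{prop:Gaussian-barT}), and the verification of the identity for $f=e_h$ is fine.

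The gap is in the passage to general $f$. To make sense of the left-hand side for $f\in W^{n,2}(E_2,\mu_2)$ and to pass to the limit along a sequence from the exponential span, you need the operator bound $\n D^kP_Tg\n_{L^2(E_1,\mu_1;H_1^{\otimes k})}\le C\n g\n_{W^{n,2}(E_2,\mu_2)}$ for $g$ in the \emph{linear span} of $\{e_h\}$, so that $D^nP_Tg_k$ converges and closability of $D^n$ identifies the limit as $D^nP_Tf$. What you actually establish is the inequality $\n P_Te_h\n_{W^{n,2}}\le\n e_h\n_{W^{n,2}}$ for each \emph{individual} exponential vector, and you then assert that $P_T$ "restricts to a contraction" on the span. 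Norm bounds on a total family do not yield an operator bound on its span: for $g=\sum_ic_ie_{g_i}$ one has $P_Tg=\sum_ic_ie_{T\s g_i}$, and the cross terms $\exp([T\s g_i,T\s g_j]_{H_1})$ are not controlled termwise by $\exp([g_i,g_j]_{H_2})$ (the inner products need not decrease under a contraction when they are negative). The clean repair is to observe that on the exponential span $D^kP_Te_h=e_{T\s h}\otimes(T\s h)^{\otimes k}=(P_T\otimes(T\s)^{\otimes k})\big(e_h\otimes h^{\otimes k}\big)$, extend by linearity to $D^kP_T=(P_T\otimes(T\s)^{\otimes k})D^k$ on the span, and use that $P_T\otimes(T\s)^{\otimes k}$ is contractive from $L^2(E_2,\mu_2;H_2^{\otimes k})$ to $L^2(E_1,\mu_1;H_1^{\otimes k})$ (Jensen applied to the kernel of $P_T$, composed with the pointwise contraction $(T\s)^{\otimes k}$). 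But that relation is precisely the ingredient the paper's proof starts from, at which point the detour through exponential vectors and the density argument become unnecessary.
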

\begin{proof} Let us check this first for $n=1$.
By an easy computation (see \cite{MN}), for $f\in W^{1,2}(E_2,\mu_2)$ we have
$P_T f\in W^{1,2}(E_2,\mu_2)$ and
$$ D P_T f = (P_T \otimes T\s)Df.$$
Consequently,
\begin{equation}
\label{eq:DhPT}
\begin{aligned}
 \E_{\mu_1} D_h P_T f
& = \E_{\mu_1} [(P_T \otimes T\s)Df, h]
\\ & =\int_E \int_E  [ Df(Tx + y), Th] \,d\rho(y)\,d\mu_1(x)
\\ & = \int_E [ Df(z), Th] \,d\mu_2(z)  = \E_{\mu_2}D_{Th} f.
\end{aligned}
\end{equation}
Here, $\rho$ is the measure constructed in Proposition \ref{prop:Gaussian-barT}.
The higher order case is proved along similar lines.
\end{proof}
In terms of the global derivative, the computation \eqref{eq:DhPT} shows that
$ \E_{\mu_1} D P_T f  = T\s  \E_{\mu_2} D f $
and more generally we have
\begin{equation} \label{stuff}
\E_{\mu_1} D^n P_T f = (T\s)^{\odot n} \E_{\mu_2} D^n f.
\end{equation}

Applying $I_n$ to both sides of (\ref{stuff}) and using Proposition
\ref{prop:LP-Gaussian} together with the density of $W^{\infty,2}(E,\mu)$ in $L^2(E,\mu)$
we have proved:

\begin{theorem}\label{thm:Gaussian}
 The following diagram commutes:
\begin{equation*}
  \begin{CD}
     L^2(E_2,\mu_2)     @> P_T >>  L^2(E_1,\mu_1)  \\
      @A  \bigoplus_{n=0}^\infty \frac1{\sqrt{n!}}I_n AA                @AA
\bigoplus_{n=0}^\infty\frac1{\sqrt{n!}} I_n  A \\
     \Gamma(H_2)     @> \bigoplus_{n=0}^\infty (T\s)^{\odot n} >>  \Gamma(H_1)
      \end{CD}
 \end{equation*}
\end{theorem}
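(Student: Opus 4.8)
The plan is to verify commutativity of the diagram on a dense subspace, by combining Stroock's formula (Proposition~\ref{prop:LP-Gaussian}) with the intertwining relation \eqref{stuff}, and then to extend the identity to all of $\Gamma(H_2)$ by continuity. Write $I^{(i)}:=\bigoplus_{n\ge 0}\frac1{\sqrt{n!}}I_n$ for the isometric isomorphism $\Gamma(H_i)\simeq L^2(E_i,\mu_i)$, $i=1,2$, and $\Gamma(T\s):=\bigoplus_{n\ge 0}(T\s)^{\odot n}$ for the bottom arrow of the diagram; what must be shown is the operator identity
$$ P_T\circ I^{(2)} = I^{(1)}\circ\Gamma(T\s) $$
on $\Gamma(H_2)$. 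First I would observe that both sides are bounded operators $\Gamma(H_2)\to L^2(E_1,\mu_1)$: the map $P_T$ is a contraction by Proposition~\ref{prop:Lp}, the $I^{(i)}$ are isometric isomorphisms, and $\Gamma(T\s)$ is a contraction because $T\s:H_2\to H_1$ is a contraction, so $(T\s)^{\otimes n}$ is a contraction commuting with the symmetrising projection $\Sigma_n$, hence $(T\s)^{\odot n}$ is a contraction on the symmetric tensor powers and the direct sum is a contraction on Fock space. Therefore it suffices to check the identity on a dense subset of $\Gamma(H_2)$.

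For this dense subset I would take $(I^{(2)})^{-1}\big(W^{\infty,2}(E_2,\mu_2)\big)$, which is dense in $\Gamma(H_2)$ because $W^{\infty,2}(E_2,\mu_2)$ is dense in $L^2(E_2,\mu_2)$ (Proposition~\ref{prop:LP-Gaussian}) and $I^{(2)}$ is an isometric isomorphism. Fix $f\in W^{\infty,2}(E_2,\mu_2)$. Iterating the commutation rule $DP_Tg=(P_T\otimes T\s)Dg$ from the proof of Lemma~\ref{lem:PT-Gaussian} yields $D^nP_Tf=(P_T\otimes(T\s)^{\otimes n})D^nf$ for every $n$, so in particular $P_Tf\in W^{\infty,2}(E_1,\mu_1)$; taking $\mu_1$-expectations and using that $\E_{\mu_1}P_Tg=\E_{\mu_2}g$ (the same change of variables as in the proof of Proposition~\ref{prop:Lp}, after pulling the bounded operator $(T\s)^{\otimes n}$ out of the integral) recovers \eqref{stuff}, i.e. $\E_{\mu_1}D^nP_Tf=(T\s)^{\odot n}\E_{\mu_2}D^nf$. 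Applying Stroock's formula to $f$ and to $P_Tf$ then gives
$$
f=\sum_{n\ge 0}\frac1{n!}I_n(\E_{\mu_2}D^nf),\qquad
P_Tf=\sum_{n\ge 0}\frac1{n!}I_n\big((T\s)^{\odot n}\E_{\mu_2}D^nf\big).
$$
The first identity means $(I^{(2)})^{-1}f=\big(\tfrac1{\sqrt{n!}}\E_{\mu_2}D^nf\big)_{n\ge 0}$, hence $\Gamma(T\s)(I^{(2)})^{-1}f=\big(\tfrac1{\sqrt{n!}}(T\s)^{\odot n}\E_{\mu_2}D^nf\big)_{n\ge 0}$, and applying $I^{(1)}$ reproduces precisely $\sum_{n\ge 0}\frac1{n!}I_n\big((T\s)^{\odot n}\E_{\mu_2}D^nf\big)=P_Tf$ by the second identity. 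So $I^{(1)}\circ\Gamma(T\s)$ and $P_T\circ I^{(2)}$ agree on the dense set, and boundedness of both sides closes the argument.

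The hard part will be nothing conceptual but rather the careful bookkeeping of the normalising constants $\tfrac1{n!}$ versus $\tfrac1{\sqrt{n!}}$ when passing between the stochastic integrals $I_n$ and the Fock-space morphism $\Gamma(T\s)$, together with the routine check that $P_T$ maps $W^{\infty,2}$ into $W^{\infty,2}$, which is where the intertwining relation $DP_T=(P_T\otimes T\s)D$ and its iterates are used. No analytic input beyond Proposition~\ref{prop:LP-Gaussian} and the already-established identity \eqref{stuff} is needed; the theorem is essentially the assembly of these two facts through the isomorphisms $I^{(1)}$ and $I^{(2)}$.
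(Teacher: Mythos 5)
Your proposal is correct and follows essentially the same route as the paper, which proves the theorem by applying $I_n$ to both sides of \eqref{stuff} and invoking Proposition \ref{prop:LP-Gaussian} together with the density of $W^{\infty,2}(E_2,\mu_2)$; your write-up merely fills in the details (boundedness of both composites, the bookkeeping of the $\tfrac1{\sqrt{n!}}$ normalisations, and the check that $P_T$ preserves $W^{\infty,2}$ via the iterated intertwining relation) that the paper leaves implicit.
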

The operator $ \Gamma(T\s)  := \bigoplus_{n=0}^\infty (T\s)^{\odot n}$
is usually called the {\em symmetric second quantisation} of the operator $T^{*}$.

\begin{remark}
The operator
$\bigoplus_{n=0}^\infty \frac1{\sqrt{n!}}I_n $ is inverse to $\bigoplus_{n=0}^\infty \frac1{\sqrt{n!}}\E_\mu D^n $
by  Proposition \ref{prop:LP-Gaussian},
so we may rewrite the commutative diagram in the following equivalent form:
 \begin{equation*}
  \begin{CD}
     L^2(E_2,\mu_2)     @> P_T >>  L^2(E_1,\mu_1)  \\
      @V  \bigoplus_{n=0}^\infty \frac1{\sqrt{n!}}\E_{\mu_2}D^n  VV
      @VV \bigoplus_{n=0}^\infty\frac1{\sqrt{n!}} \E_{\mu_1}D^n  V \\
     \Gamma(H_2)     @> \bigoplus_{n=0}^\infty (T\s)^{\odot n} >>  \Gamma(H_1)
      \end{CD}
 \end{equation*}
This diagram should be compared with the one in the next section.
\end{remark}

Let us finally return to the setting of the previous section and derive the
identity \eqref{eq:identityPTK} by the methods of the present section.
Fix $x\s\in E_2\s$ and let $h:= j_2\s x\s$. Then $K_{\mu_2,x\s} = \exp(i\phi_h - \n h\n_{H_2}^2)$
and therefore by Lemma \ref{lem:PT-Gaussian}
(which we apply to the real and imaginary parts of $K_{\mu_2,x\s}$), for all $g\in H_1$ we have
\begin{align*}
\E_{\mu_1} D_{g} P_T K_{\mu_2,x\s}
 = \E_{\mu_2} D_{Tg} K_{\mu_2.x\s}
 = i [h,Tg] \E_{\mu_2} K_{\mu_2,x\s}=   i [T\s h,g],
\end{align*}
so $\E_{\mu_1} D P_T K_{\mu_2,x\s} =  i  T\s h.$
Likewise we have
$$ \E_{\mu_1} D^n P_T K_{\mu_2,x\s} =i^n \otimes (\underbrace{T\s h\otimes \dots\otimes T\s h}_{n \
{\rm times}}).$$
Applying the $n$-fold stochastic integral, using Proposition \ref{prop:LP-Gaussian}
(again considering real and imaginary parts separately),
and using the (analytic extension of the) generating function
identity for the Hermite polynomials,
 we obtain
\begin{align*}
 P_T K_{\mu_2,x\s}
& = \sum_{n=0}^\infty \frac1{n!} I_n (\E_\mu D^n P_T K_{\mu_2,x\s} )
\\ & = \sum_{n=0}^\infty  \frac{i^n}{n!}I_n(\underbrace{T\s h\otimes \hdots\otimes T\s h}_{n \
{\rm times}})
\\ & = \sum_{n=0}^\infty  \frac{i^n}{n!} H_n(\phi_{T\s h}^n)
\\ & = \exp\big(i\phi_{T\s h}-\frac12\n
T\s h\n_H^2\big) = K_{\mu_1,T\s x\s},
\end{align*}
where the last identity used that $T\circ j_2 = j_2\circ T$ implies $T\s h = T\s j_2\s x\s = j_2\s T\s x\s.$

\section{Second quantisation: the Poisson random measure case}\label{sec:purejump}

We proceed with a similar result
in the case where $\mu_1$ and $\mu_2$ are
infinitely
divisible measures of pure jump type. For this we need do delve a bit deeper
into the structure
of such measures and develop their connection with Poisson random measures.

Let $(Y,\calY,\nu)$ be a $\sigma$-finite measure space and let $\N(Y)$ denote
the set of all
$\N$-valued measures on $Y$. We endow this space with the $\sigma$-algebra
$\sigma(\calY)$ generated
by $\calY$, that is, the smallest $\sigma$-algebra which renders the mappings
$\xi\mapsto \xi(B)$
measurable for all $B\in \calY$.

Let $(\Omega,\calF,\P)$ be a probability space and $\Pi$ be a
 Poisson random measure having intensity measure $\nu$. We denote by $\P_\Pi$
the
distribution of $\Pi$, that is,
$\P_\Pi$ is the probability measure on $(\N(Y), \sigma(\calY))$ given by
$$ \P_\Pi (B) = \P(\Pi\in B)$$
for measurable $B\subseteq \N(Y)$.

Following Last and Penrose \cite{LP}, for a measurable function
$f: \N(Y)\to \R$ and $y\in Y$ we define the measurable function
$D_y f: \N(Y)\to \R$ by
$$D_y f (\eta):= f(\eta+\delta_y) - f(\eta).$$
The function $D_{y_1,\hdots,y_n}^nf:\N(Y)\to\R$ is defined recursively by
$$ D^n_{y_1,\hdots,y_n}f = D_{y_n} D^{n-1}_{y_1,\hdots,y_{n-1}}f,$$ for $y_{1}, \ldots, y_{n} \in Y$.
This function is symmetric, i.e. it is invariant under any permutation of the variables.

We have a canonical isometry $ L_{\odot}^2(Y^n) = (L^2(Y))^{\odot n}$, where the former denotes
the closed subspace of $L^2(Y^n)$ comprised of all symmetric functions.
We set  $$\Gamma(L_{\odot}^2(Y)) = \bigoplus_{n=0}^\infty L_{\odot}^2(Y^n)$$
with norm $ \n (f_n)_{n=0}^\infty\n_H^2 = \sum_{n=0}^\infty \n f_n\n_{L_{\odot}^2(Y^n)}^2$;
for $n=0$ it is understood that
$(L^2(Y))^{\odot 0} = L_{\odot}^2(Y^0) := \R$. By
$I^n: L_{\odot}^2(Y^n)\to L^2(\Omega)$ we denote the
$n$-fold stochastic integral associated with $\Pi$ as defined in \cite{LP}.
We note that part (3) of the Last-Penrose theorem
is essentially a Stroock formula for
Poisson measures (cf. Proposition \ref{prop:LP-Gaussian}) and that a version of
this result for a class of real-valued L\'{e}vy processes may be
found in \cite{ESV}.

\begin{theorem}[Last-Penrose \hbox{\cite{LP}}]\label{thm:LP}
\begin{enumerate}
\item For all $\nN$, $ y_{1}, \ldots, y_{n} \in Y$, and  $f\in L^2(\P_\Pi)$ we have
$\tau^n f \in L_{\odot}^2(Y^n)$, where
$$ \tau^n f(y_1,\hdots,y_n) := \E D^n_{y_1,\hdots,y_n}f(\Pi).$$
\item The mapping  $ \tau:= \bigoplus_{n=0}^\infty \frac1{\sqrt{n!}}\tau^n$
is a surjective isometry from $L^2(\P_\Pi)$ onto
$ \Gamma(L_{\odot}^2(Y))$.
\item For all $f\in L^2(P_\Pi)$ we have
\begin{align*}
f(\Pi) = \sum_{n=0}^\infty \frac1{n!} I_n (\E D^n f(\Pi)).
\end{align*}
\end{enumerate}
\end{theorem}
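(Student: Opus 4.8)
The plan is to follow the route of Last and Penrose \cite{LP}, whose engine is a family of Poisson--Wick exponential functionals. For a bounded $u\in L^1(\nu)\cap L^2(\nu)$ with $1+u>0$ $\nu$-a.e.\ define $L_u:\N(Y)\to\R$ by
$$L_u(\eta):=\exp\Bigl(-\int_Y u\,d\nu\Bigr)\prod_{y\in\eta}(1+u(y)),$$
the product running over the points of $\eta$ counted with multiplicity. A direct computation (via the Campbell/Mecke formula) gives $\E L_u(\Pi)=1$ and, more generally, $\E(L_u(\Pi)L_v(\Pi))=\exp(\int_Y uv\,d\nu)$. Since $L_u(\eta+\delta_y)=(1+u(y))L_u(\eta)$ we obtain the single-point identity $D_yL_u=u(y)\,L_u$, hence $D^n_{y_1,\dots,y_n}L_u=\bigl(\prod_{i=1}^n u(y_i)\bigr)L_u$ and therefore $\tau^nL_u=u^{\otimes n}$. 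Consequently
$$\langle\tau L_u,\tau L_v\rangle_{\Gamma(L_{\odot}^2(Y))}=\sum_{n=0}^\infty\frac1{n!}\langle u^{\otimes n},v^{\otimes n}\rangle_{L_{\odot}^2(Y^n)}=e^{\int_Y uv\,d\nu}=\langle L_u,L_v\rangle_{L^2(\P_\Pi)},$$
so $\tau$ preserves the inner product on the $L_u$, and hence is isometric on their linear span $\calD$.

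The second step is density and surjectivity. One shows that $\calD$ is dense in $L^2(\P_\Pi)$ — the Poisson analogue of the totality of exponential vectors, which rests on the fact that $\Pi$ is determined by the random variables $\Pi(B)$, $B\in\calY$, together with a monotone-class argument — so the isometry valid on $\calD$ extends to all of $L^2(\P_\Pi)$. In particular $\tau^nf\in L_{\odot}^2(Y^n)$ with $\n\tau^nf\n_{L_{\odot}^2(Y^n)}\le\sqrt{n!}\,\n f\n_{L^2(\P_\Pi)}$, the quantitative half of part~(1). For surjectivity, let $(\phi_n)_{n\ge 0}\in\Gamma(L_{\odot}^2(Y))$ be orthogonal to the range of $\tau$; then for every admissible $u$ and all small real $c$,
$$0=\langle(\phi_n)_n,\tau L_{cu}\rangle=\sum_{n=0}^\infty\frac{c^n}{\sqrt{n!}}\,\langle\phi_n,u^{\otimes n}\rangle_{L_{\odot}^2(Y^n)},$$
and since this power series in $c$ vanishes identically, $\langle\phi_n,u^{\otimes n}\rangle=0$ for every $n$ and every $u$. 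As the symmetric tensor powers $u^{\otimes n}$ span a dense subspace of $L_{\odot}^2(Y^n)$ (polarise near $u=0$), this forces $\phi_n=0$ for all $n$; since the range of $\tau$ is closed it equals $\Gamma(L_{\odot}^2(Y))$, which is part~(2).

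It remains to handle the symmetry assertion in part~(1) and the expansion in part~(3). Symmetry of $\tau^nf$ is immediate from the inclusion--exclusion identity
$$D^n_{y_1,\dots,y_n}f(\eta)=\sum_{J\subseteq\{1,\dots,n\}}(-1)^{n-|J|}f\Bigl(\eta+\sum_{j\in J}\delta_{y_j}\Bigr),$$
which is manifestly invariant under permutations of the $y_i$ (equivalently, the $D_y$ commute). For part~(3) one invokes the add-one-point derivative formula $D_yI_n(g)=n\,I_{n-1}(g(y,\cdot))$ (see \cite{LP}); iterating and taking expectations gives $\tau^mI_n(g)=n!\,g$ when $m=n$ and $0$ otherwise, so $\tau$ carries $I_n(g)$ to the Fock vector equal to $\sqrt{n!}\,g$ in degree $n$ and zero elsewhere. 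Applying $\tau^{-1}$ to $\tau f=\bigl(\tfrac1{\sqrt{n!}}\tau^nf\bigr)_{n\ge 0}$ and using the linearity of $I_n$ then yields, for every $f\in L^2(\P_\Pi)$,
$$f(\Pi)=\sum_{n=0}^\infty\frac1{n!}I_n(\E D^nf(\Pi)),$$
which is part~(3) (and identifies the $n$-th chaos kernel of $f$ as $\tfrac1{n!}\tau^nf$).

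The step I expect to be most delicate is making the two workhorse facts rigorous: the add-one-point formula $D_yI_n(g)=n\,I_{n-1}(g(y,\cdot))$ and the totality of $\calD$ in $L^2(\P_\Pi)$. Both require careful $L^2$-approximation — first for simple symmetric kernels and step functions $u$, then by density — and some care with integrability so that the add-one-point operator $D_y$ genuinely maps a dense domain of $L^2(\P_\Pi)$ into $L^2(\nu\otimes\P_\Pi)$. Since these are precisely the technical heart of \cite{LP}, in the present paper it suffices to cite that reference; the argument above indicates why the theorem holds.
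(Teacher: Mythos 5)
The paper offers no proof of this theorem: it is stated with the attribution ``Last--Penrose \cite{LP}'' and simply quoted from that reference, so there is nothing in the paper to compare your argument against line by line. Your sketch is a correct and faithful reconstruction of the Last--Penrose strategy --- compute $D^n$ on a total family of exponential-type functionals to get the isometry of $\tau$ onto $\Gamma(L^2_\odot(Y))$, then use the add-one-point formula $D_y I_n(g) = nI_{n-1}(g(y,\cdot))$ to invert $\tau$ and obtain the chaos expansion --- and you correctly flag the genuine technical points (totality of the exponential family, integrability needed to identify the $L^2$-extension of $\tau^n$ with the pointwise formula $\E D^n_{y_1,\dots,y_n}f(\Pi)$ for general $f\in L^2(\P_\Pi)$), which is exactly the content carried by the citation.
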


From this point on, we shall consider the special case  $Y = E$, where $E$
is a separable real Banach space.
We use the shorthand notation
$$\bar{\Pi}(dx) = \one_{\{0 < \n x\n\le 1\}}{\wh\Pi}(dx) + \one_{\{\n x\n
>1\}}{\Pi}(dx),$$
where $\wh \Pi$ is the compensated Poisson random measure,
$$ \wh\Pi(B) = \Pi(B) - \nu(B),$$
and $\nu$ is now assumed to be a L\'{e}vy measure on $E$
(see e.g. \cite{Hey, Lin} for the definition).
We will need to use the L\'{e}vy-It\^{o} decomposition for Banach space-valued
L\'{e}vy processes,
as established in \cite{RvG}, and the next lemma is key in that
regard.

\begin{lemma}
The function $x\mapsto x $ is Pettis integrable with respect to $\bar \Pi$.
\end{lemma}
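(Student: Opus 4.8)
The plan is to verify Pettis integrability of $x \mapsto x$ with respect to $\bar\Pi$ by separating the behaviour on the large-jump region $\{\|x\|>1\}$ and the small-jump region $\{0<\|x\|\le 1\}$, since on these two pieces $\bar\Pi$ is respectively an (uncompensated) Poisson random measure and a compensated one. On the large-jump region, $\Pi$ restricted to $\{\|x\|>1\}$ is a Poisson random measure with finite intensity $\nu(\{\|x\|>1\})<\infty$ (this finiteness being part of the definition of a L\'evy measure), so it has almost surely finitely many atoms there; hence $\int_{\|x\|>1} x\,\Pi(dx)$ is a genuine (finite, almost surely) Bochner sum of $E$-valued random vectors, and in particular the map $x\mapsto x$ is trivially Pettis integrable against $\one_{\{\|x\|>1\}}\Pi$. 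The only subtlety is integrability, and here one invokes the standard fact that a L\'evy measure integrates $\|x\|\wedge 1$ near the origin and, after the truncation at $1$, one needs $\int_{\|x\|>1}$ to be handled at the level of the random measure rather than in expectation — but Pettis integrability only requires that $x^*\mapsto \int \langle x, x^*\rangle \,\bar\Pi(dx)$ be a well-defined scalar random variable for each $x^*$ and that the resulting functional on $E^*$ be represented by an element of $E$, so the finite-atom structure already does the job on this piece.

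Next I would treat the small-jump region. Here the relevant object is $\int_{0<\|x\|\le 1} x\,\wh\Pi(dx)$, the compensated integral, and the key point is the classical $L^2$-isometry for compensated Poisson integrals: for each $x^*\in E^*$,
\begin{equation*}
\E\Big|\int_{0<\|x\|\le 1}\langle x, x^*\rangle\,\wh\Pi(dx)\Big|^2 = \int_{0<\|x\|\le 1}\langle x, x^*\rangle^2\,\nu(dx) \le \|x^*\|^2\int_{0<\|x\|\le 1}\|x\|^2\,\nu(dx),
\end{equation*}
which is finite because $\nu$ is a L\'evy measure. This shows that $x^*\mapsto \int_{0<\|x\|\le 1}\langle x,x^*\rangle\,\wh\Pi(dx)$ defines, for almost every $\omega$, a scalar-valued linear functional on $E^*$, and that it lies in $L^2(\Omega)$ for each fixed $x^*$. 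To upgrade this to Pettis integrability — i.e. to produce an $E$-valued random variable whose pairing with every $x^*$ gives these scalars — I would use the L\'evy--It\^o decomposition for Banach-space valued L\'evy processes as established in \cite{RvG}: that decomposition already produces, as an $E$-valued random vector, the small-jumps part $\int_{0<\|x\|\le 1} x\,\wh\Pi(dx)$, and its weak (pairing) description coincides with the scalar integrals computed above. Thus the Pettis integral exists and equals this $E$-valued random vector plus the finite large-jump sum.

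The main obstacle I anticipate is the passage from scalarly-defined integrals to a genuine $E$-valued representative in the small-jump regime, since $E$ is merely a separable Banach space with no reflexivity or Radon--Nikod\'ym property assumed; one cannot simply appeal to an abstract Pettis/Dunford theorem. The clean route is to quote \cite{RvG} directly: the content of the L\'evy--It\^o decomposition there is precisely that this compensated small-jump integral converges in $E$ (in $L^2$ or almost surely along a suitable exhaustion $\{1/k<\|x\|\le 1\}$), so the limiting random vector is the desired Pettis integral. In writing the proof I would therefore (i) recall that $\nu$ being a L\'evy measure gives $\int_E (\|x\|^2\wedge 1)\,\nu(dx)<\infty$ and $\nu(\{\|x\|>1\})<\infty$; (ii) handle $\one_{\{\|x\|>1\}}\Pi$ by the almost-sure finiteness of its support; (iii) handle $\one_{\{0<\|x\|\le 1\}}\wh\Pi$ by the $L^2$-isometry above and the convergence of the compensated integral in $E$ guaranteed by \cite{RvG}; and (iv) check that pairing the resulting $E$-valued random variable with an arbitrary $x^*\in E^*$ reproduces $\int_E \langle x,x^*\rangle\,\bar\Pi(dx)$, which is exactly the definition of Pettis integrability.
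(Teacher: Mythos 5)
Your overall architecture --- reduce everything to the Riedle--van Gaans results in \cite{RvG} --- is essentially the paper's, but there is one genuinely false step and one elision worth flagging. The false step is your item (i): on a general separable Banach space it is \emph{not} true that a L\'evy measure satisfies $\int_E(\n x\n^2\wedge 1)\,\nu(dx)<\infty$. That characterisation is special to Hilbert spaces (cotype $2$ is what is needed for the implication ``L\'evy measure $\Rightarrow$ square-integrability of the norm near $0$'', and type $2$ for the converse); this is precisely why the paper refers to \cite{Hey, Lin} for the definition rather than stating an integrability condition, and why its proof avoids any $L^2$ estimate in the norm of $E$. Your bound $\int_{0<\n x\n\le 1}\lb x,x\s\rb^2\,\nu(dx)\le \n x\s\n^2\int_{0<\n x\n\le1}\n x\n^2\,\nu(dx)$ therefore proves nothing in general. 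The scalar finiteness you actually need is nevertheless true, but for a different reason: the image measure $x\s(\nu)$ restricted to $\R\setminus\{0\}$ is a L\'evy measure on $\R$, hence integrates $t^2\wedge c^2$ for every $c>0$, and on $\{\n x\n\le 1\}$ one has $\lb x,x\s\rb^2 = \lb x,x\s\rb^2\wedge\n x\s\n^2$. With that repair the isometry step survives.

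The elision: the theorem you quote from \cite{RvG} concerns the jump measure of a L\'evy process, i.e.\ a Poisson random measure $N$ on $[0,\infty)\times E$ with intensity $dt\times\nu$, not the given $\Pi$ on $E$. The paper bridges this gap by setting $M(B):= N([0,1]\times B)$ and observing that $M$ and $\Pi$ are identically distributed, so that Pettis integrability transfers; you should include this (short) step, since as written you apply the L\'evy--It\^o decomposition to $\Pi$ directly, which is not literally what \cite{RvG} provides. Finally, your large-jump/small-jump decomposition is harmless but unnecessary once the reduction to $N$ is made: \cite{RvG} already yields Pettis integrability of $x\mapsto x$ with respect to the full $\bar N$, which is all the paper uses.
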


\begin{proof}
 Let $N$ be a Poisson random measure on $[0,\infty)\times E$ with intensity
measure $dt\times \nu$.
By a theorem of Riedle and Van Gaans \cite{RvG}, $x\mapsto x$ is Pettis
integrable with respect to $\bar N$.
It follows that $x\mapsto x$ is  Pettis integrable with respect to $\bar M$,
where $M$
is the Poisson random measure on $E$ given by $M(B) = N([0,1]\times B)$. Since
$M$ and $\Pi$
are identically distributed (both being Poisson random measures with intensity
measure $\nu$),
this proves the lemma.
\end{proof}

We will be interested in Borel probability measures $\mu$ on $E$ which arise as
the distribution of
$E$-valued random variables $X$ of the form
\begin{align}\label{eq:eq} X = \xi + \int_{E} x \,\bar{\Pi}(dx)
\end{align}
where $\Pi$ is a Poisson random measure on $E$ whose intensity measure $\nu$ is a L\'evy measure
and $\xi\in E$ is a given vector.
The interest of such random variables comes from the L\'evy-It\^o decomposition
for $E$-valued L\'evy processes, which
asserts that if $(L(t))_{t\ge 0}$ is a L\'evy process without Gaussian part,
then $L(1)$ is precisely of this form (see \cite{RvG}). Note that $\mu$ is a
Radon measure (since every Borel measure on a separable Banach space is Radon)
and infinitely divisible. In particular, its characteristic function vanishes
nowhere.

It will be convenient to define, for $f\in L^2(E,\mu)$,
\begin{align}\label{eq:D}
\tilde D_y f(x):= f(x+y) - f(x)
\end{align}
The higher order derivatives are defined recursively by
$\tilde{D}^n_{y_1,\dots,y_n}= \tilde{D}_{y_n}
\tilde{D}^{n-1}_{y_1,\dots,y_{n-1}}$.

\medskip
Suppose now that $\mu_1$ and $\mu_2$ are two measures of the above form,
associated with random variables
 $X_1:\Omega\to E_1$ and
$X_2:\Omega\to E_2$ which are given in terms of the vectors $\xi_1\in E_1$ and
$\xi_2\in E_2$ and Poisson random measures
$\Pi_1$ and $\Pi_2$ as in \eqref{eq:eq}.
 Consider a linear skew map $T:E_1\to E_2$
with respect to the pair $(\mu_1,\mu_2)$, so that
$$T\mu_1* \rho = \mu_2$$
for some unique Borel probability measure $\rho$.

We have the following analogue
of Lemma \ref{lem:PT-Gaussian}:

\begin{lemma}\label{lem:commute} 
For all
$f\in L^2(E_{2},\mu_2)$ and $y_1,\dots, y_n\in
E_{1}$,
\begin{align}\label{eq:higher-der-Poisson}
\E_{\mu_1} \tilde D_{y_1,\hdots,y_n}^n P_T f =\E_{\mu_2} \tilde
D_{Ty_1,\hdots,Ty_n}^n f.
\end{align}
\end{lemma}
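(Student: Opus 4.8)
The plan is to imitate the proof of Lemma \ref{lem:PT-Gaussian}, but working directly with the difference operators $\tilde D_y$ rather than Malliavin derivatives, and to exploit the explicit integral formula for $P_T$ from Proposition \ref{prop:Lp}. First I would treat the case $n=1$. Starting from the definition $P_T f(x) = \int_{E_2} f(T(x)+y)\,d\rho(y)$, I would write out
$$
\tilde D_{y_1} P_T f(x) = P_T f(x+y_1) - P_T f(x) = \int_{E_2}\bigl(f(T(x)+Ty_1+y) - f(T(x)+y)\bigr)\,d\rho(y) = \int_{E_2} (\tilde D_{Ty_1} f)(T(x)+y)\,d\rho(y),
$$
where the crucial point is that $T$ is \emph{linear}, so $T(x+y_1) = T(x) + Ty_1$. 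This shows $\tilde D_{y_1} P_T f = P_T(\tilde D_{Ty_1} f)$ as an identity of functions on $E_1$ (one should note $\tilde D_{Ty_1}f \in L^2(E_2,\mu_2)$ is not automatic, but since $\mu_2 = T\mu_1 * \rho$ the relevant integrals below will make sense; alternatively one first does the computation for $f \in B_{\rm b}(E_2)$ and then passes to the limit). Integrating against $\mu_1$ and using the contractivity / norm identity already established in Proposition \ref{prop:Lp} (indeed the computation there shows $\int_{E_1} g(T(x)+y)\,d\rho(y)\,d\mu_1(x) = \int_{E_2} g\,d\mu_2$ for suitable $g$), I get
$$
\E_{\mu_1} \tilde D_{y_1} P_T f = \int_{E_1}\int_{E_2} (\tilde D_{Ty_1}f)(T(x)+y)\,d\rho(y)\,d\mu_1(x) = \int_{E_2} (\tilde D_{Ty_1}f)(z)\,d\mu_2(z) = \E_{\mu_2} \tilde D_{Ty_1} f,
$$
which is \eqref{eq:higher-der-Poisson} for $n=1$.

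For the general case I would argue by induction on $n$. The key observation is the operator identity $\tilde D_{y_1} \circ P_T = P_T \circ \tilde D_{Ty_1}$ derived above, which can be iterated: applying it repeatedly gives
$$
\tilde D^n_{y_1,\dots,y_n} P_T f = P_T\bigl(\tilde D^n_{Ty_1,\dots,Ty_n} f\bigr)
$$
as functions on $E_1$, using the recursive definition $\tilde D^n_{y_1,\dots,y_n} = \tilde D_{y_n}\tilde D^{n-1}_{y_1,\dots,y_{n-1}}$ and the corresponding one for the $Ty_i$. Then a single integration against $\mu_1$, together with the same change-of-variables identity $\int_{E_1}\int_{E_2} g(T(x)+y)\,d\rho(y)\,d\mu_1(x) = \int_{E_2} g\,d\mu_2$ applied to $g = \tilde D^n_{Ty_1,\dots,Ty_n} f$, yields
$$
\E_{\mu_1}\tilde D^n_{y_1,\dots,y_n} P_T f = \int_{E_2} \tilde D^n_{Ty_1,\dots,Ty_n} f\,d\mu_2 = \E_{\mu_2}\tilde D^n_{Ty_1,\dots,Ty_n} f,
$$
as desired.

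The main obstacle, and the only genuinely delicate point, is integrability: for $f \in L^2(E_2,\mu_2)$ the iterated differences $\tilde D^n_{y_1,\dots,y_n} f$ need not lie in $L^2(E_2,\mu_2)$, and the various Fubini interchanges must be justified. I expect to handle this in the standard way: first prove the identity for $f \in B_{\rm b}(E_2)$, where all functions in sight are bounded and all the manipulations above are trivially legitimate; then note that $\mathscr{E}_{\mu_2}$, or more simply $B_{\rm b}(E_2)$, is dense in $L^2(E_2,\mu_2)$, observe that both sides of \eqref{eq:higher-der-Poisson} are continuous in $f$ with respect to the relevant topologies (the left side via contractivity of $P_T$ plus boundedness of the relevant translation-difference maps when one restricts to $f$ and $P_Tf$ that are genuinely square-integrable), and pass to the limit. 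In fact, since this lemma will only be applied to exponential martingale vectors $K_{\mu_2,x\s}$ (as in the Gaussian section), it suffices to verify it on that class, where $\tilde D^n_{y_1,\dots,y_n} K_{\mu_2,x\s}$ is an explicit bounded multiple of $K_{\mu_2,x\s}$ and everything is completely explicit; I would remark on this to keep the argument clean.
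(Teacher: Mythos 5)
Your proof is correct and follows essentially the same route as the paper's: the paper phrases the computation probabilistically, using that $TX_1+R$ and $X_2$ are identically distributed when $R\sim\rho$ is independent of $X_1$, which is exactly your change-of-variables identity $\int_{E_1}\int_{E_2} g(T(x)+y)\,d\rho(y)\,d\mu_1(x)=\int_{E_2}g\,d\mu_2$ combined with the linearity of $T$, followed by the same induction for higher $n$. Your extra care in isolating the pointwise intertwining $\tilde D_{y}\circ P_T=P_T\circ\tilde D_{Ty}$ and in addressing integrability (the paper glosses over the fact that $\tilde D_{Ty}f$ need not be $\mu_2$-integrable for general $f\in L^2(E_2,\mu_2)$, whereas you reduce to bounded $f$ or to the exponential martingale vectors to which the lemma is actually applied) only tightens the published argument.
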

\begin{proof}
Suppose the random variable $R: \tilde \Omega\to E_{2}$, defined on an
independent
probability space $(\tilde \Omega, \tilde \calF, \tilde P)$,  has
 distribution $\rho$. Then using the fact that $TX_1 + R$ and $X_2$ are
identically distributed,
\begin{align*}\E_{\mu_1} P_T f  =
\E \tilde\E f(TX_1 + R) = \E f(X_2) = \E_{\mu_2} f
\end{align*}
and
\begin{align*}
\E_{\mu_1} \tilde D_y P_T f & = \E_{\mu_1} P_T f(\cdot+y) - \E_{\mu_1} P_T
f(\cdot)
\\ & = \E\tilde \E f(Ty +TX_1 + R) - f(TX_1+R)
\\ & = \E  f(Ty + X_2)-f(X_2)
\\ & = \E_{\mu_2} f(\cdot+{Ty}) - \E_{\mu_2} f(\cdot)
\\ & = \E_{\mu_2} \tilde D_{Ty} f.
\end{align*}
For the higher derivatives we use a straightforward inductive
argument.
\end{proof}
Below we will think of the left and right hand side of \eqref{eq:higher-der-Poisson}
as symmetric functions on $E^n$. As such, the identity will be written as
$$  \E_{\mu_1}\tilde D^n P_T f
=  \E_{\mu_1}\tilde D^n f\circ T^{\odot n} ,$$
where
$$ (g\circ T^{\odot n})(y_1,\dots,y_n):= g(Ty_1,\dots,Ty_n).$$

\medskip
 Define, for $k=1,2$,
the operators $j_k: L^2(E_k, \mu_k)\to L^2(\P_{\Pi_k})$ by
 $$ j_k f(\eta) = f\Big(\xi + \int_{E} x \,\bar{\eta}(dx)\Big),
\quad \eta\in \N(E).$$
The rigorous interpretation of this identity is provided by noting that
$$ \n j_k f\n_{L^2(\P_\Pi)}^2 = \E \Big| f\Big(\xi +
\int_{E} x \,\bar{\Pi}(dx)\Big)\Big|^2 = \n
f\n_{L^2(E,\mu_k)}^2,$$
which means that $j_k f(\eta)$ is well-defined for $\P_\Pi$-almost all $\eta$
and
that $j_k$ establishes an isometry from $ L^2(E_k, \mu_k)$ into
$L^2(\P_{\Pi_k}).$
Note that
\begin{align}\label{eq:jPi}
j_k f (\Pi_k) = f(X_k)
\end{align}
and
$$ j_k \circ \tilde D = D \circ j_k,$$
and therefore, for all $g\in L^2(E_k,\mu_k)$,
$$ (\tau_k^n \circ j_k) g = \E D^n  j_k g(\Pi_k) = \E j_k \tilde D^n g (\Pi_k)=
 \E\tilde D^n  g(X_k) = \E_{\mu_k} \tilde D^n  g.  $$
Using this identity in combination with Lemma \ref{lem:commute}, for  all $f\in L^2(E_2,\mu_2)$ we obtain
$$(\tau_1^n \circ j_1) P_T f
=   \E_{\mu_1}\tilde D^n P_T f
=  \E_{\mu_1}\tilde D^n f\circ T^{\odot n}
= (\tau_1^n \circ j_1) f\circ T^{\odot n}.
$$
When combined with the contractivity of $P_T$
and the surjectivity of $\tau$ (see Theorem \ref{thm:LP}), this identity
implies that the mapping $f\mapsto f\circ T^{\odot n}$ is a
is a linear contraction from
$L_{\odot}^2(E_2^n,\nu_2^n)$ to $L_{\odot}^2(E_1^n,\nu_1^n)$.

In summary we have proved the following theorem.

\begin{theorem}\label{thm:Poisson}  Let $T:E_1\to E_2$ be a
linear Borel mapping. Under the above assumptions,
the mapping $(T^{*})^{\odot n}: f \mapsto f\circ T^{\odot n}$ is a linear contraction from
$L_{\odot}^2(E_2^n,\nu_2^n)$ to $L_{\odot}^2(E_1^n,\nu_1^n)$, and the following diagram commutes:
\begin{equation*}
  \begin{CD}
L^2(E_2,\mu_2)     @> P_T >>  L^2(E_1,\mu_1)  \\
    @V  \bigoplus_{n=0}^\infty \frac1{\sqrt{n!}}\E_{\mu_2}\tilde D^n   VV
    @VV \bigoplus_{n=0}^\infty \frac1{\sqrt{n!}}\E_{\mu_1}\tilde D^n   V \\
\Gamma( L^2(E_2,\nu_2))@ >\bigoplus_{n=0}^\infty (T^*)^{\odot n} >>\Gamma(L^2(E_1,\nu_1))
      \end{CD}
 \end{equation*}
\end{theorem}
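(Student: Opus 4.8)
The plan is to transport everything into Fock space and reduce the theorem to a single intertwining identity. For $k=1,2$ let $j_k\colon L^2(E_k,\mu_k)\to L^2(\P_{\Pi_k})$ be the isometric embeddings already introduced and let $\tau_k$ be the Last-Penrose isometries, and set $\Phi_k:=\tau_k\circ j_k\colon L^2(E_k,\mu_k)\to\Gamma(L^2(E_k,\nu_k))$; these are exactly the vertical arrows of the diagram. The goal is to establish the operator identity $\Phi_1\circ P_T=\bigl(\bigoplus_{n\ge 0}(T^*)^{\odot n}\bigr)\circ\Phi_2$ and, simultaneously, that each $(T^*)^{\odot n}\colon g\mapsto g\circ T^{\odot n}$ is a well-defined linear contraction from $L_{\odot}^2(E_2^n,\nu_2^n)$ to $L_{\odot}^2(E_1^n,\nu_1^n)$; granting these two facts, the commutativity of the diagram is immediate and $\bigoplus_{n\ge 0}(T^*)^{\odot n}$ is a contraction of Fock spaces because each of its graded pieces is.

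The key steps, in order, would be: (i) the one-line identity $D\circ j_k=j_k\circ\tilde D$ on the difference operators (adding a point $\delta_y$ to $\eta$ shifts $\int_E x\,\bar\eta(dx)$ by exactly $y$), which via part~(1) of Theorem~\ref{thm:LP} yields $\tau_k^n\circ j_k=\E_{\mu_k}\tilde D^n$ and hence $\Phi_k=\bigoplus_{n\ge 0}\frac1{\sqrt{n!}}\E_{\mu_k}\tilde D^n$; (ii) feeding this into Lemma~\ref{lem:commute}: for all $f\in L^2(E_2,\mu_2)$,
$$(\tau_1^n\circ j_1)(P_Tf)=\E_{\mu_1}\tilde D^nP_Tf=(\E_{\mu_2}\tilde D^nf)\circ T^{\odot n}=\bigl((\tau_2^n\circ j_2)f\bigr)\circ T^{\odot n},$$
which is the intertwining identity and already shows that $g\circ T^{\odot n}\in L_{\odot}^2(E_1^n,\nu_1^n)$ whenever $g$ lies in the range of $\tau_2^n\circ j_2$; (iii) the contraction estimate, obtained by testing (ii) on exponential martingale vectors: Proposition~\ref{prop:K} gives $P_TK_{\mu_2,x^*}=K_{\mu_1,x^*\circ T}$, a direct evaluation of $\E_{\mu_k}\tilde D^n$ on these vectors gives $\Phi_2K_{\mu_2,x^*}=\bigoplus_{n\ge 0}\frac1{\sqrt{n!}}h^{\otimes n}$ and $\Phi_1P_TK_{\mu_2,x^*}=\bigoplus_{n\ge 0}\frac1{\sqrt{n!}}(h\circ T)^{\otimes n}$ with $h=e^{i\la\cdot,x^*\ra}-1$, and inserting a finite combination $f=\sum_j c_jK_{\mu_2,x_j^*}$ into the contractivity of $P_T$ (Proposition~\ref{prop:Lp}), using that the $\Phi_k$ are isometric, produces $\sum_{n\ge 0}\frac1{n!}\n\sum_j c_j(h_j\circ T)^{\otimes n}\n^2\le\sum_{n\ge 0}\frac1{n!}\n\sum_j c_jh_j^{\otimes n}\n^2$ (in the respective symmetric $L^2$-norms), where $h_j=e^{i\la\cdot,x_j^*\ra}-1$; (iv) isolating the $n$-th summand and extending by continuity --- using that the vectors $h_j^{\otimes n}$ are total in $L_{\odot}^2(E_2^n,\nu_2^n)$ --- to obtain the contraction $(T^*)^{\odot n}$, after which the diagram is read off from (ii).

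The step I expect to be the main obstacle is (iv): separating the chaos levels. The intertwining identity together with $\n P_T\n\le 1$ only states outright that $\bigoplus_{n\ge 0}(T^*)^{\odot n}$ is a contraction on the range of $\Phi_2$, which in general is a proper subspace of $\Gamma(L^2(E_2,\nu_2))$ --- the embedding $j_2$, hence $\Phi_2$, is typically not onto --- and the estimate it supplies entangles all the chaos degrees at once. Turning it into the degree-$n$ statement requires, on the one hand, the totality of the exponential-martingale-type functions $\{e^{i\la\cdot,x^*\ra}-1:x^*\in E_k^*\}$ in $L^2(E_k,\nu_k)$ for $k=1,2$ (equivalently, the surjectivity assertion in part~(2) of Theorem~\ref{thm:LP}, read through the isometries), so that the vectors $h_j^{\otimes n}$ really do span a dense subspace of the $n$-th symmetric power and the range of $\tau_k^n\circ j_k$ is dense in $L_{\odot}^2(E_k^n,\nu_k^n)$; and, on the other hand, a mechanism for extracting the degree-$n$ norm from the summed contractivity of $P_T$ --- for instance restricting to those $f$ whose chaos expansion is concentrated in a single level, which isolates the $n$-th line of the intertwining identity. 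A cleaner-looking but equally substantial alternative would be to bypass the Fock-space argument and prove directly that the push-forward $T(\nu_1)$ is dominated by $\nu_2$ --- which one expects, since $\nu_2$ is the L\'evy measure of $T(\mu_1)*\rho$ and ought to split as $T(\nu_1)$ together with the L\'evy measure of the factor $\rho$ --- whereupon $g\mapsto g\circ T^{\odot n}$ is contractive for the obvious reason; this merely relocates the work to identifying the L\'evy measure of $T(\mu_1)$ when $T$ is assumed only to be a linear Borel map.
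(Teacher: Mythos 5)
Your steps (i) and (ii) reproduce the paper's proof exactly: the authors introduce the same isometries $j_k$, observe that $j_k\circ\tilde D=D\circ j_k$ and hence $\tau_k^n\circ j_k=\E_{\mu_k}\tilde D^n$, and then feed Lemma \ref{lem:commute} into the Last--Penrose isometry to obtain the intertwining identity $(\tau_1^n\circ j_1)(P_Tf)=\big((\tau_2^n\circ j_2)f\big)\circ T^{\odot n}$, from which the commutativity of the diagram is read off just as you describe. Where you and the paper part company is the deduction that each $f\mapsto f\circ T^{\odot n}$ is a contraction on $L_{\odot}^2(E_2^n,\nu_2^n)$: the paper disposes of this in a single sentence, invoking only the contractivity of $P_T$ and the surjectivity of $\tau$ from Theorem \ref{thm:LP}, and does not use exponential martingale vectors at this point at all (the identity \eqref{eq:Prod-exp} that you want for step (iii) appears only afterwards, in the separate verification that $P_TK_{\mu_2,x\s}=K_{\mu_1,x\s\circ T}$). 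Your steps (iii)--(iv) are therefore an attempt to supply detail that the paper omits, and the obstacle you single out is genuine: surjectivity of $\tau_2$ does not make the composite $\tau_2\circ j_2$ surjective --- its range corresponds to $L^2(\sigma(X_2))$, in general a proper closed subspace of $\Gamma(L^2(E_2,\nu_2))$ --- and the inequality extracted from $\n P_T\n\le1$ entangles all chaos degrees, so neither your argument as written nor the paper's one-line appeal actually isolates the degree-$n$ estimate. Of the two remedies you sketch, the second (showing directly that the push-forward $T(\nu_1)$ is dominated by $\nu_2$, so that composition with $T^{\odot n}$ is contractive for elementary reasons) is the more robust, since the scaling tricks that separate chaos levels in the Gaussian case fail here ($e^{it\la\cdot,x\s\ra}-1$ is not homogeneous in $t$); but as it stands your proof, like the paper's, leaves this final step open.
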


To make the connection with the commuting diagram in the Gaussian case, which features the $n$-fold
stochastic integrals rather than $n$-fold derivatives, we note that by Theorem \ref{thm:LP} the following
diagram commutes as well for $k=1,2$:
\begin{equation*}
  \begin{CD}
L^2(E,\mu_k)     @> f\mapsto f(X_k) >>   L^2(\Omega) \\
    @V \bigoplus_{n=0}^\infty \frac1{\sqrt{n!}}\E_{\mu_k}\tilde D^n   VV
    @AA \bigoplus_{n=0}^\infty \frac1{\sqrt{n!}}I_n A \\
 \Gamma( L^2(E_k,\nu_k))@ > = >> \Gamma( L^2(E_k,\nu_k))
      \end{CD}
 \end{equation*}

Theorem \ref{thm:Poisson} is a generalisation of the result obtained by Peszat
\cite{Pesz} in the case where $\mu_1 = \mu_2$ is an invariant measure
associated with a Mehler semigroup on a Hilbert space $ E_1 = E_2$.
\smallskip

As we did in the previous section, we wish to make the link with the results
on skew operators. In principle we could repeat the Gaussian computation
at the end of Section \ref{sec:Gaussian}, but this requires the evaluation
of a rather intractable Poisson stochastic integral. There is, however, 
a simpler argument.

We start with some preparations. If $X$ and $\mu$ are as in \eqref{eq:eq},
then $K_{\mu,x\s} = \exp(i\lb x,x\s\rb - \zeta(x\s))$,
where $\wh{\mu}(x\s) = \exp(\zeta(x\s))$ is the characteristic function of
$\mu$ (with $\zeta$ the {\em L\'evy symbol} of $\mu$; see \cite[page 31]{App2}). Then,
for all $y\in E$ and for all $x\s\in E\s$,
\begin{align*}
\E_{\mu}\tilde D_{y} K_{\mu,x\s}
 & =  \E_{\mu}K_{\mu,x\s}(\cdot +y) -  \E_{\mu} K_{\mu,x\s}(\cdot)
\\ & = \exp(-\zeta(x\s))\E_{\mu} \exp(i\lb \cdot,x\s\rb)[\exp(i\lb y,x\s\rb)-1]
 = \exp(i\lb y, x\s\rb)-1 .
\end{align*}
Likewise, for $y_{1}, \ldots, y_{n} \in E$,
\begin{align}\label{eq:Prod-exp} \E_{\mu}\tilde D_{y_1,\dots,y_n}^n K_{\mu,x\s}
= \prod_{j=1}^n [\exp(i\lb y_j, x\s\rb)-1].
\end{align}

Now suppose that $T:E_1\to E_2$ is a skew operator with respect to $(\mu_1,\mu_2)$,
where the measures $\mu_k$ are the distributions of random variables $X_k$ as in \eqref{eq:eq} for $k=1,2$.
Then, by \eqref{eq:jPi}, the Last-Penrose theorem,
Lemma \ref{lem:commute} and \eqref{eq:Prod-exp}, for all $x\s\in E_2\s$ we have
\begin{align*}
P_T K_{\mu_2,x\s} (X_1) = j_1 P_T K_{\mu_2,x\s}(\Pi_{1})
 & =
\sum_{n=0}^\infty \frac1{n!} I_n (\E D^n j_1 P_T K_{\mu_2,x\s}(\Pi_1))
\\ & =
\sum_{n=0}^\infty \frac1{n!} I_n (\E_{\mu_1} \tilde D^n  P_T K_{\mu_2,x\s})
\\ & =
\sum_{n=0}^\infty \frac1{n!} I_n (\E_{\mu_2} \tilde D_{T\cdot}^n  K_{\mu_2,x\s})
\\ & = \sum_{n=0}^\infty \frac1{n!} I_n \Big(\prod_{j=1}^n [\exp(i\lb T\cdot_j, x\s\rb)-1]\Big)
\intertext{and, by duality and then repeating the same computation backwards,}
 & = \sum_{n=0}^\infty \frac1{n!} I_n \Big(\prod_{j=1}^n [\exp(i\lb \cdot_j, T\s x\s\rb)-1]\Big)
\\ & = \sum_{n=0}^\infty \frac1{n!} I_n (\E_{\mu_1} \tilde D^n  K_{\mu_1,T\s x\s})
\\ & = \sum_{n=0}^\infty \frac1{n!} I_n (\E D^n j_1 K_{\mu_1,T\s x\s}(\Pi_1))
\\ & = j_1 K_{\mu_1,T\s x\s}(\Pi_{1}) =
K_{\mu_1,T\s x\s}(X_1).
\end{align*}
It follows that $P_T K_{\mu_2,x\s}  = K_{\mu_1,x\s \circ T}$,
in agreement with \eqref{eq:identityPTK}.

\begin{remark}
The results of Sections \ref{sec:Gaussian} and \ref{sec:purejump} suggest the
problem of extending the theory to that case where $\mu_1$ and $\mu_{2}$ are
arbitrary infinitely divisible measures. We conjecture that Theorems \ref{thm:Gaussian}
and \ref{thm:Poisson} extend to this more general framework.
\end{remark}
\appendix
\section{Linear independence of the functions $K_{\mu,x\s}$}

The {\em support} of a Radon measure $\mu$ on  $E$ is the complement of the
union of all open $\mu$-null sets in $E$.
We denote the support of $\mu$ by ${\rm supp}(\mu)$ and its closed linear span
by $E_\mu$.
 We say that $\mu$ has {\em linear support} if ${\rm supp}(\mu) = E_\mu$. The
proof of the next result uses a variant of a standard technique of reduction to
a system of linear equations that can be found in \cite[pp. 20-21]{Gui} or
\cite[pp. 126-7]{Par}.

\begin{proposition}
Suppose that $\mu$ has linear support and let $F\subseteq E\s$ be such that its
points are separated by $E_\mu$.
Then the family $\{x\mapsto \exp(i\lb x,x\s\rb);x\s\in F\}$
is linearly independent in $L^2(E,\mu)$.
\end{proposition}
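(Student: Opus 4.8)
The plan is to reduce the statement to a finite linear independence assertion and then exploit the separation hypothesis together with the linear-support hypothesis. Suppose we have distinct functionals $x\s_1,\dots,x\s_m\in F$ and scalars $c_1,\dots,c_m\in\C$ with $\sum_{j=1}^m c_j\exp(i\lb x,x\s_j\rb)=0$ for $\mu$-almost every $x\in E$. Because $\mu$ has linear support, the $\mu$-null set on which the identity fails has complement dense in $E_\mu$, and since $x\mapsto\exp(i\lb x,x\s_j\rb)$ is continuous, the identity in fact holds for \emph{every} $x\in E_\mu$, indeed for every $x\in\mathrm{supp}(\mu)$ and then on all of $E_\mu$ by continuity and density of the span. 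The goal is then to show $c_1=\dots=c_m=0$.

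The main step is to turn the functional equation on the vector space $E_\mu$ into a system of scalar equations. First I would show the $x\s_j$ remain pairwise distinct \emph{as functionals on $E_\mu$}: this is exactly where the hypothesis that the points of $F$ are separated by $E_\mu$ is used, i.e. if $x\s_i\neq x\s_j$ then there is $x\in E_\mu$ with $\lb x,x\s_i\rb\neq\lb x,x\s_j\rb$. Pick such an $x_0\in E_\mu$ so that the $m$ numbers $\lb x_0,x\s_j\rb$ are pairwise distinct. Replacing $x$ by $tx_0$ for $t\in\R$ in the identity $\sum_j c_j\exp(i\lb x,x\s_j\rb)=0$ (valid since $tx_0\in E_\mu$) gives
\begin{equation*}
\sum_{j=1}^m c_j\exp\!\big(it\lb x_0,x\s_j\rb\big)=0\qquad\text{for all }t\in\R.
\end{equation*}
This is a finite exponential sum in $t$ with pairwise distinct frequencies $\lambda_j:=\lb x_0,x\s_j\rb$, and such sums are linearly independent functions of $t$ — one sees this by the standard Vandermonde/Wronskian argument (differentiate repeatedly at $t=0$, or integrate against $e^{-i\lambda_k t}$ and average), which forces $c_1=\dots=c_m=0$. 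This is the ``reduction to a system of linear equations'' alluded to in the references to \cite{Gui} and \cite{Par}.

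The only genuine obstacle is the first reduction — upgrading the almost-everywhere identity to a genuine pointwise identity on $E_\mu$. Here the linear-support hypothesis is essential: it guarantees $\mathrm{supp}(\mu)=E_\mu$, so every nonempty relatively open subset of $E_\mu$ has positive $\mu$-measure, hence meets the set where the identity holds; continuity of each exponential then propagates the identity to a dense subset of $E_\mu$ and finally to all of $E_\mu$, which is what lets us substitute the one-parameter family $tx_0$. Everything after that is the elementary theory of exponential polynomials. I would organise the write-up as: (i) pass to the pointwise identity on $E_\mu$ via linear support and continuity; (ii) use the separation hypothesis to choose $x_0\in E_\mu$ making the frequencies $\lb x_0,x\s_j\rb$ distinct; (iii) restrict to the line $\R x_0$ and invoke linear independence of characters $t\mapsto e^{i\lambda_j t}$ to conclude.
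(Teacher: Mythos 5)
Your overall strategy is sound and, once one gap is repaired, yields a correct proof that is organised slightly differently from (and arguably more cleanly than) the one in the paper. The gap is in your step (ii): you write ``Pick such an $x_0\in E_\mu$ so that the $m$ numbers $\lb x_0,x\s_j\rb$ are pairwise distinct.'' The separation hypothesis only provides, for each individual pair $i\neq j$, \emph{some} $x\in E_\mu$ with $\lb x, x\s_i-x\s_j\rb\neq 0$; it does not immediately provide a single $x_0$ working for all $\binom{m}{2}$ pairs simultaneously. To obtain one you must observe that each set $N_{ij}=\{x\in E_\mu:\ \lb x, x\s_i-x\s_j\rb=0\}$ is a proper closed linear subspace of $E_\mu$ (proper precisely by the separation hypothesis), and that $E_\mu$ cannot be covered by finitely many proper closed subspaces --- by the Baire category theorem, or by the elementary fact that a vector space over an infinite field is never a finite union of proper subspaces. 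Any $x_0\in E_\mu\setminus\bigcup_{i<j}N_{ij}$ then has the required property. This is not a throwaway remark: the covering argument is exactly the nontrivial content that the paper supplies via its Baire category step, so asserting the existence of $x_0$ without it leaves the proof incomplete.

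Once that is inserted, the rest is fine and matches the paper in substance. The upgrade from the $\mu$-a.e.\ identity to a pointwise identity on $E_\mu={\rm supp}(\mu)$, via positivity of $\mu$ on relatively open sets meeting the support and continuity of the exponentials, is the paper's first step verbatim (note that your phrase ``density of the span'' is vacuous here: since ${\rm supp}(\mu)=E_\mu$ by hypothesis there is nothing to extend, and for a non-linear identity one could not in general pass from the support to its closed linear span). The restriction to the line $t\mapsto tx_0$ followed by linear independence of the characters $t\mapsto e^{i\lambda_jt}$ with distinct frequencies (Vandermonde on the derivatives at $t=0$) is the same computation the paper performs. The organisational difference is that the paper runs the Vandermonde argument for \emph{every} $x\in E_\mu$, concludes that for each $x$ some pair $(m,n)$ satisfies $\lb x, x\s_m-x\s_n\rb=0$, and only \emph{then} invokes Baire to show that one fixed pair fails to be separated on all of $E_\mu$; you front-load the covering argument to produce a single good direction and then need the one-dimensional character-independence statement only once. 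Both proofs rest on the same two ingredients; your ordering is the tidier one, provided you actually justify the existence of $x_0$.
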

\begin{proof}
Let $x_{1}\s, \ldots x_{N}\s \in F$ be distinct linear functionals and let
$c_{1}, \ldots, c_{N} \in \R$ be such that
$\sum_{n=1}^{N} c_{n} \exp(i\lb \cdot,x_n\s\rb) = 0$ in $L^2(E,\mu)$.
In particular, the set $G$ of all $x\in E_\mu$ such that
$\sum_{n=1}^{N} c_{n} \exp(i\lb x,x_n\s\rb) = 0$ has full measure.
By the assumption on $\mu$, every open set $V$ which intersects $E_\mu$ has
positive $\mu$-measure and therefore
intersects $G$ in a set of positive $\mu$-measure.
It follows that $G$ is dense in $E_\mu$.
But then, by continuity, we find that $G=E_\mu$, that is, $\sum_{n=1}^{N} c_{n}
\exp(i\lb x,x_n\s\rb) = 0$ for all $x\in E_\mu$.
Hence $\sum_{n=1}^{N}{c_{n}}\exp(i t\la
x, x_n\s\ra) = 0$
for all $t \in \R$ and $x\in E_\mu$.
For $r=0,1,2,\hdots, N$ differentiate $r$ times with
respect to $t$ (where $1 \leq r \leq n-1$) and then put $t=0$. This yields
$\sum_{n=1}^{N}{c_{n}}\la x, x_n\s \ra ^{r}= 0$ for all $x\in E_\mu$.
We thus obtain a system of $N$ linear equations in ${c_{1}},
\ldots {c_{N}}$ and it has a non-zero solution if and only if
$$\left|
\begin{array}{c c c c} 1 & 1 & \cdots & 1 \\ \la x, x_{1}\s \ra  & \la x,
x_{2}\s \ra   & \cdots & \la x, x_{N}\s \ra \\
\cdot & \cdot & \cdots & \cdot  \\ \cdot & \cdot & \cdots & \cdot\\
\cdot & \cdot & \cdots & \cdot\\ \cdot & \cdot & \cdots & \cdot\\  \la x,
x_{1}\s\ra^{N-1} & \la x, x_{2}\s \ra^{N-1}
 & \cdots & \la x, x_{N}\s \ra^{N-1} \end{array}
\right| = 0.$$

  The left hand side of this equation is a Vandermonde determinant and so the
equation simplifies to
$$\prod_{1\leq m<n \leq N} (\la x, x_{m}\s \ra - \la x, x_{n}\s \ra) = 0.$$
Hence for each $x \in E$ there exist $1\le m,m\le N$ such that $\la x, x_{m}\s-
x_{n}\s \ra = 0$. The choice of
$m$ and $n$ here depends on $x$. We now prove that in fact they are independent
of the choice of
vector $x\in E_\mu$. To this end for each $1 \leq m,n \leq N$ define
$F_{mn}:=\{x \in E_\mu:\ \la x, x_{m}\s- x_{n}\s \ra = 0\}$.
Then $F_{mn}$ is closed and $\bigcup_{m,n=1}^{N}F_{mn} = E_\mu$.
By the Baire category theorem, at least one pair $(m,n)$
must be such that $F_{mn}$ has non-empty interior $O_{mn}$ in $E_\mu$. Let
$(m_0,n_0)$ be such a pair and
fix $x_{0} \in O_{m_0n_0}$.
Then by linearity $\la x - x_{0}, x_{m_0}\s - x_{n_0}\s \ra = 0$ for all $x \in
O_{m_0n_0}$. In other words
$\la y, x_{m_0}\s - x_{n_0}\s \ra = 0$ for all $y \in O_{m_0n_0}-\{x_{0}\}$. But
$O_{m_0n_0}-\{x_{0}\}$
contains an open
neighbourhood of $0$ in $E_\mu$
and hence by linearity again, $\la x, x_{m_0}\s- x_{n_0}\s \ra = 0$ for all $x
\in E_\mu$,
contradicting our assumptions. So we must have ${c_{1}} = \cdots = {c_{N}} = 0$.
\end{proof}

\vspace{5pt}

\begin{center} {\it Acknowledgment}
\end{center}
The authors are grateful to Josep Vives who pointed out to us the analogies between the
Last-Penrose and Stroock formulae and to Ben Goldys for helpful discussions.

\end{document}